\newtheorem{theorem}{Theorem}[section] 
\newtheorem{fact}[theorem]{Fact}
\newtheorem{lemma}[theorem]{Lemma}
\newtheorem{proposition}[theorem]{Proposition}
\newtheorem{corollary}[theorem]{Corollary}
\newtheorem{definition}[theorem]{Definition}
\newtheorem{remark}[theorem]{Remark}
\newtheorem{example}[theorem]{Example}
\begin{document}

\author[M. Nasernejad  and   J. Toledo]{Mehrdad  Nasernejad$^{1,2}$  and   Jonathan Toledo$^{*,3}$}
\title[Asymptotic Properties of Filtrations of Ideals]{Asymptotic Properties of Filtrations of Ideals}

\subjclass[2010]{13B25, 13F20, 13E05.} 
\keywords {Filtrations of ideals;  Symbolic filtrations;  Persistence property; Strong persistence property; Symbolic strong persistence property.}

\thanks{$^*$Corresponding author}

\thanks{E-mail addresses:  m$\_$nasernejad@yahoo.com  and  jonathan.toledo@infotec.mx}  
\maketitle

\begin{center}
{\it
$^{1}$Univ. Artois, UR 2462, Laboratoire de Math\'{e}matique de  Lens (LML), \\  F-62300 Lens, France \\ 
$^{2}$Universit\'e  Caen Normandie, ENSICAEN, CNRS, Normandie Univ, GREYC UMR  6072, F-14000 Caen,  France\\
$^{3}$INFOTEC Centro de investigaci\'{o}n e innovaci\'{o}n en informaci\'{o}n \\
 y comunicaci\'{o}n, Ciudad de M\'{e}xico,14050, M\'{e}xico
}
\end{center}


\begin{abstract}
 We introduce a unified framework for studying persistence phenomena in
commutative algebra via filtrations of ideals. For a filtration
$\mathcal{F}=\{I_i\}_{i\in\mathbb{N}}$, we define $\mathcal{F}$-persistence and
$\mathcal{F}$-strong persistence, extending the classical notions for ordinary
and symbolic powers of ideals. We show that if $\mathcal{F}$ is strongly
persistent, then $\mathcal{F}_{\mathrm{sym}}$ is strongly persistent, 
 where    $\mathcal{F}_{\mathrm{sym}}$   denotes the symbolic filtration  associated with  the 
filtration $\mathcal{F}$. 
 In addition, we prove that if $\mathcal{F}$ is strongly persistent, then
$\mathcal{F}$ is persistent.
\end{abstract}

\maketitle

\section{Introduction}

The study of persistence phenomena for powers and symbolic powers of ideals has played a central role in commutative algebra over the last decades. 
Let $I$ be an ideal of a commutative Noetherian ring $R$. A prime ideal
$\mathfrak{p}\subset R$ is called an \emph{associated prime} of $I$ if there
exists an element $h\in R$ such that $\mathfrak{p}=(I:_R h).$ 
The collection of all prime ideals associated with $I$, denoted by
$\mathrm{Ass}_R(R/I)$, is referred to as the \emph{set of associated primes} of
$I$. Our focus is on the behavior of the sets $\mathrm{Ass}_R(R/I^t)$ as the
exponent $t$ varies. 
A fundamental result due to Brodmann \cite{BR} establishes that the sequence
 $\{\mathrm{Ass}_R(R/I^t)\}_{t\geq 1}$ 
eventually stabilizes. More precisely, there exists a positive integer $t_0$
such that $\mathrm{Ass}_R(R/I^t)=\mathrm{Ass}_R(R/I^{t_0})$ for all $t \geq t_0.$ 
The smallest such integer $t_0$ is known as the \emph{index of stability} of $I$,
and the common value $\mathrm{Ass}_R(R/I^{t_0})$ is called the
\emph{stable set of associated primes} of $I$, denoted by
$\mathrm{Ass}^{\infty}(I)$.

Brodmann’s theorem has motivated a variety of further investigations. One
natural question that arises is whether the inclusion
\[
\mathrm{Ass}_R(R/I^t)\subseteq \mathrm{Ass}_R(R/I^{t+1})
\]
holds for all $t\geq 1$. McAdam \cite{Mc} provided a counterexample showing that
this inclusion does not hold in general. An ideal $I\subset R$ is said to
satisfy the \emph{persistence property} if the above inclusion is valid for
every $t\geq 1$. Moreover, an ideal $I$ is said to have the \emph{strong persistence property}
if
\[
(I^{t+1}:_R I)=I^t \quad \text{for all } t\geq 1.
\]
Nevertheless, Ratliff \cite{RA} proved  that  this equality holds for all
sufficiently large values of $t$. It is important to note that the strong
persistence property implies the persistence property (for instance, see the
proof of \cite[Proposition~2.9]{N1}), whereas the converse implication fails in
general.

In particular, an  ideal $I$ in a commutative Noetherian ring $R$ has the  {\it symbolic strong persistence property} if 
$$(I^{(t+1)}:_R I^{(1)})=I^{(t)}  \quad \text{for all } t\geq 1,$$   
 where $I^{(t)}=\bigcap_{\mathfrak{p}\in \mathrm{Min}(I)}(I^tR_\mathfrak{p}\cap R)$ denotes the  $t$-th symbolic  power  of $I$.

Now, assume that $I$ is a monomial ideal in the polynomial ring
$R=K[x_1,\ldots,x_n]$ over a field $K$, where $x_1,\ldots,x_n$ are
indeterminates. Even in this setting, the persistence property does not hold
universally. In fact, a counterexample exists even among square-free monomial
ideals, as shown in \cite{KSS}.  Generally, monomial ideals play a central role in understanding the deep connections
between combinatorics and commutative algebra. This interaction allows
algebraic techniques to be applied to combinatorial problems and, conversely,
combinatorial insights to inform algebraic questions. Particularly, one of the early contributors to this area is Villarreal, who introduced the
concept of edge ideals. Specifically, for a simple finite undirected graph $G$,
the \emph{edge ideal} $I(G)$, introduced in \cite{VI1}, is generated by the
monomials $x_i x_j$, where $\{i,j\}$ is an edge of $G$. 

 To date, numerous works employing combinatorial methods have been devoted to identifying classes of monomial ideals that satisfy the 
 persistence property (see \cite{HQ, MMV, N2, N3}), the strong persistence property (see \cite{ANR1, ANR2, BNT, NKA}), 
 and the symbolic strong persistence property (see \cite{KNT, NKRT}).

The main objective of this article is to introduce and develop a unified and
general theory of persistence for \emph{filtrations of ideals}. Given a
filtration $\mathcal{F}=\{I_i\}_{i\in\mathbb{N}}$ of a commutative ring $R$,
we define the notions of \emph{$\mathcal{F}$-persistence} and
\emph{$\mathcal{F}$-strong persistence} as natural extensions of the classical
persistence and strong persistence properties. Within this framework, the
classical notions of persistence, strong persistence, and symbolic strong
persistence are recovered as particular cases corresponding to the
filtrations given by ordinary powers and symbolic powers of an ideal. In particular, 
motivated by the theory of symbolic powers, we further introduce the
\emph{symbolic filtration} $\mathcal{F}_{\mathrm{sym}}$ associated with  a given
filtration $\mathcal{F}$. This paper has two main results. First, we show that if $\mathcal{F}$ is strongly
persistent, then $\mathcal{F}_{\mathrm{sym}}$ is strongly persistent (refer to 
Corollary~\ref{SPP-SSPP}). Second, we prove that if $\mathcal{F}$ is strongly
persistent, then $\mathcal{F}$ is persistent (cf.
Corollary~\ref{SPP-PP}). We conclude the paper by investigating how filtrations
behave under standard operations on rings, such as direct sums. In particular, as an application, we show that the direct sum filtration $\mathcal{F} \bigoplus\mathcal{G}$ is strongly persistent if and only if both $\mathcal{F}$ and $\mathcal{G}$ are strongly persistent, see Proposition \ref{DirectSum-SPP}.

 Throughout this text, the symbol $\mathbb{N}$ stands for the nonnegative integers. Also, all rings in this paper are commutative and Noetherian. 


\section{Filtrations of ideals}
In this section, we will introduce and study the concept of $\mathcal{F}$-Strong Persistence and $\mathcal{F}$-Persistence, where $\mathcal{F}$ 
 is a filtration. These concepts generalize the strong persistence property and the persistence property.
 To accomplish this, we begin with the following definition. 


\begin{definition}
Let $R$ be a ring. Then a  family of ideals $\mathcal{F}=\{I_n\}_{n\in\mathbb{N}}$ of $R$ is called a \emph{filtration} of $R$ if it satisfies the following conditions:
\begin{itemize}
\item[(i)]  $I_0=R$; 
    \item[(ii)] $I_{n+1}\subseteq I_n$ for all $n\in\mathbb{N}$;
    \item[(iii)]  $I_n I_m \subseteq I_{n+m}$ for all $n,m\in\mathbb{N}$.
\end{itemize}
\end{definition}


\begin{example} (\cite[Exercise 4.3.45]{V1})
\em{
Let $I$ be an ideal of a  ring $R$. Then  the following families define filtrations of $R$:
\begin{enumerate}[(a)]
\item $I_i = I^i$, the ordinary powers of $I$;
\item $I_i = \overline{I^i}$, the integral closure of $I^i$;
\item $I_i = I^{(i)}$, the symbolic powers of $I$;
\item $I_n = \bigoplus_{i \ge n} R_i$, where $R = \bigoplus_{i \ge 0} R_i$ is a graded ring.
\end{enumerate}
}
\end{example}


The following proposition plays a key role in the proof of Proposition \ref{Pro.Symbolic.1}.  

\begin{proposition} \label{Pro. Min}
Let $\mathcal{F}=\{I_i\}_{i\in\mathbb{N}}$ be a filtration of a  ring $R$.  
Then $\mathrm{Min}(I_i)=\mathrm{Min}(I_1)$ for all   $i\geq 1,$ where $\mathrm{Min}(L)$  denotes the set of  minimal prime ideals of an ideal $L$. 
\end{proposition}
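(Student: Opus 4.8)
The plan is to sandwich $I_i$ between two ideals with the same radical, namely $I_1^i$ and $I_1$, and then pass to minimal primes.

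First I would establish the chain of inclusions $I_1^i \subseteq I_i \subseteq I_1$ for every $i \geq 1$. The right-hand inclusion is immediate from axiom (ii) of a filtration, which gives $I_i \subseteq I_{i-1} \subseteq \cdots \subseteq I_1$. For the left-hand inclusion, I would argue by induction on $i$: the case $i=1$ is trivial, and assuming $I_1^{i} \subseteq I_i$, axiom (iii) yields $I_1^{i+1} = I_1^{i} I_1 \subseteq I_i I_1 \subseteq I_{i+1}$.

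Next I would take radicals throughout $I_1^i \subseteq I_i \subseteq I_1$, using that $A \subseteq B$ implies $\sqrt{A} \subseteq \sqrt{B}$, together with the standard identity $\sqrt{I_1^{\,i}} = \sqrt{I_1}$. This gives $\sqrt{I_1} \subseteq \sqrt{I_i} \subseteq \sqrt{I_1}$, hence $\sqrt{I_i} = \sqrt{I_1}$. Finally, since the minimal primes of an ideal $L$ are exactly the minimal elements of $V(L) = V(\sqrt{L})$, the equality of radicals forces $\mathrm{Min}(I_i) = \mathrm{Min}(I_1)$, as desired.

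There is no serious obstacle here; the only point requiring a word of justification is the last one, that $\mathrm{Min}(L)$ depends only on $\sqrt{L}$, which follows because $V(L) = V(\sqrt{L})$ as subsets of $\mathrm{Spec}(R)$. One could alternatively phrase the whole argument in terms of $V(-)$ directly: the inclusions of ideals reverse to $V(I_1) \subseteq V(I_i) \subseteq V(I_1^{\,i}) = V(I_1)$, so $V(I_i) = V(I_1)$ and the two closed sets have the same minimal elements.
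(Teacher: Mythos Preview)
Your proof is correct and rests on the same key sandwich $I_1^{\,i}\subseteq I_i\subseteq I_1$ that the paper uses. The only difference is cosmetic: the paper extracts the conclusion by an explicit double inclusion argument on minimal primes (for each $\mathfrak p\in\mathrm{Min}(I_1)$ it finds $\mathfrak p'\in\mathrm{Min}(I_i)$ with $\mathfrak p'\subseteq\mathfrak p$ and then uses minimality, and symmetrically), whereas you package that step by passing to radicals (or equivalently to $V(-)$) and invoking the standard fact that $\mathrm{Min}(L)$ depends only on $\sqrt{L}$. Your version is a bit more streamlined; the paper's version is the same argument unpacked by hand.
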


\begin{proof}
Fix $i\geq 1$.   First, let $\mathfrak{p}\in \mathrm{Min}(I_1)$.   Due to $I_i \subseteq I_1$ and $I_1 \subseteq \mathfrak{p}$, we can deduce that 
$I_i \subseteq \mathfrak{p}$. In addition, it follows from the definition that  $I_1^i \subseteq I_i$.  
Since $I_i \subseteq \mathfrak{p}$, there exists  some  $\mathfrak{p}' \in  \mathrm{Min}(I_i)$ such that 
$I_i \subseteq \mathfrak{p}' \subseteq \mathfrak{p}$.  Because $I_1^i \subseteq I_i \subseteq \mathfrak{p}'$, we get  $I_1 \subseteq \mathfrak{p}' \subseteq \mathfrak{p}$. 
  By the minimality of $\mathfrak{p}$ over $I_1$, we can conclude that $\mathfrak{p}'=\mathfrak{p}$. This implies that $\mathfrak{p}\in \mathrm{Min}(I_i)$. 
  Hence,   $\mathrm{Min}(I_1) \subseteq   \mathrm{Min}(I_i)$.

Conversely, let $\mathfrak{p}\in \mathrm{Min}(I_i)$. Once again, since $I_1^i \subseteq I_i$, we obtain $I_1 \subseteq \mathfrak{p}$.  
 Hence there exists  some  $\mathfrak{p}'\in \mathrm{Min}(I_1)$ such that  $I_1 \subseteq \mathfrak{p}' \subseteq \mathfrak{p}$. 
 Due to $I_i \subseteq I_1$, we have  $I_i \subseteq \mathfrak{p}' \subseteq \mathfrak{p}$. This implies 
 that  $\mathfrak{p}'=\mathfrak{p}$  by the minimality of $\mathfrak{p}$ over $I_i$.  Therefore, $\mathfrak{p}\in \mathrm{Min}(I_1)$, and consequently 
$\mathrm{Min}(I_i) \subseteq   \mathrm{Min}(I_1)$. This finishes the proof. 
\end{proof}


\bigskip

Motivated by the notion of symbolic powers, we introduce the \emph{symbolic filtration}
$\mathcal{F}_{\mathrm{sym}}$ associated with   a given filtration $\mathcal{F}$ in Proposition \ref{Pro.Symbolic.1}. 
 To do this, we need to recall the next definition.

\begin{definition}
Let $R$ be a  ring  and let $I$ be an ideal of $R$.  
Assume that
\[
I=\bigcap_{j=1}^r Q_j
\]
is a minimal primary decomposition of $I$.  
We define the \emph{first symbolic power} of $I$ by
\[
I^{(1)} := \bigcap_{\sqrt{Q_j}\in \mathrm{Min}(I)} Q_j.
\]
\end{definition}

In particular, it is not hard to show that we always have  $$I^{(1)} =\bigcap_{\mathfrak p \in \mathrm{Min}(I)}
\left(I R_{\mathfrak p} \cap R\right).$$

 Before presenting the next proposition, let us recall the following theorem.
 
 \begin{theorem}(\cite[Theorem 3.61]{Sharp})  \label{PAT}
 Let $P_1, \ldots, P_n$, where $n > 2$, be ideals of the commutative ring $R$
such that at most two of $P_1, \ldots, P_n$ are not prime.
Let $S$ be an additive subgroup of $R$ which is closed under multiplication
(for example, $S$ could be an ideal of $R$ or a subring of $R$).
Suppose that
\[
S \subseteq \bigcup_{i=1}^n P_i.
\]
Then $S \subseteq P_j$ for some $j$ with $1 \le j \le n$.
 \end{theorem}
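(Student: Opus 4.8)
The plan is to proceed by induction on $n$, with $n=2$ treated as a base case (this case is genuinely needed, since the inductive step may shrink the number of ideals down to two). For $n=2$, suppose $S\subseteq P_1\cup P_2$ yet $S\not\subseteq P_1$ and $S\not\subseteq P_2$. Choose $a\in S\setminus P_1$ and $b\in S\setminus P_2$; then necessarily $a\in P_2$ and $b\in P_1$. Since $S$ is an additive subgroup, $a+b\in S$, so $a+b\in P_1$ or $a+b\in P_2$. But $a+b\in P_1$ forces $a=(a+b)-b\in P_1$, and $a+b\in P_2$ forces $b=(a+b)-a\in P_2$; both contradict the choices. Hence the claim holds for $n=2$ without invoking primality.

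Now fix $n\geq 3$ and assume the statement for every collection of between $2$ and $n-1$ ideals with at most two non-prime members. Suppose $S\subseteq\bigcup_{i=1}^n P_i$ but $S\not\subseteq P_i$ for each $i$; I aim for a contradiction. First I would reduce to an \emph{irredundant} cover: if $S\subseteq\bigcup_{j\neq i}P_j$ for some index $i$, then the family $\{P_j:j\neq i\}$ has $n-1\geq 2$ members, still with at most two non-prime, so the inductive hypothesis gives $S\subseteq P_j$ for some $j$, contradicting the assumption. Thus we may assume that for each $i$ there is an element $x_i\in S$ with $x_i\in P_i$ and $x_i\notin P_j$ for all $j\neq i$ (membership in $P_i$ being forced since $x_i\in S\subseteq\bigcup_j P_j$).

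At this point I would invoke the hypothesis: because $n\geq 3$ and at most two of the $P_i$ are not prime, at least one is prime, so after relabeling we may assume $P_n$ is prime. Set $y:=x_1x_2\cdots x_{n-1}+x_n$, which lies in $S$ since $S$ is an additive subgroup closed under multiplication. For each $i$ with $1\leq i\leq n-1$, the product $x_1\cdots x_{n-1}$ lies in $P_i$ (as $x_i\in P_i$), while $x_n\notin P_i$, so $y\notin P_i$. For $i=n$, none of $x_1,\dots,x_{n-1}$ lies in $P_n$, so by primality $x_1\cdots x_{n-1}\notin P_n$, whereas $x_n\in P_n$, so $y\notin P_n$. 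Therefore $y\in S$ but $y\notin\bigcup_{i=1}^n P_i$, a contradiction. The main obstacle — and the precise point where the hypotheses are used essentially — is this last step: one needs at least one prime among the $P_i$ so that the product $x_1\cdots x_{n-1}$ escapes it, which is exactly what "$n>2$" together with "at most two non-prime" guarantees; for $n=2$ no such prime is available, which is why the base case is proved by the separate elementary argument above.
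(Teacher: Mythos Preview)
The paper does not supply its own proof of this result; it is quoted verbatim from Sharp's textbook and used as a black box in the proofs of Proposition~\ref{Pro.Symbolic.1} and Theorem~\ref{IIsym}. Your argument is correct and is essentially the classical proof of the Prime Avoidance Lemma found in Sharp: induction on $n$ with the $n=2$ case handled by the elementary $a+b$ trick, reduction to an irredundant cover via the inductive hypothesis, and construction of the escaping element $x_1\cdots x_{n-1}+x_n$ using the fact that at least one $P_i$ is prime when $n\geq 3$.
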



\begin{proposition} \label{Pro.Symbolic.1}
Let $\mathcal{F}=\{I_i\}_{i\in\mathbb{N}}$ be a filtration of $R$.  
Then the family
\[
\mathcal{F}_{\mathrm{sym}} := \{I_{(i)}\}_{i\in\mathbb{N}},
\quad \text{where } I_{(i)} := I_i^{(1)} \;  \; \text{and} \; \; I_{(0)}:=R, 
\]
is also a filtration of $R$.
\end{proposition}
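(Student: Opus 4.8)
The plan is to verify the three filtration axioms for $\mathcal{F}_{\mathrm{sym}}$ directly, exploiting the fact that all the ideals $I_i$ with $i\geq 1$ share the same set of minimal primes by Proposition~\ref{Pro. Min}. Write $W = \mathrm{Min}(I_1) = \mathrm{Min}(I_i)$ for every $i\geq 1$, so that $I_{(i)} = \bigcap_{\mathfrak{p}\in W}(I_i R_{\mathfrak{p}}\cap R)$ for all $i\geq 1$ using the localization description of the first symbolic power. Axiom (i) is immediate since $I_{(0)} := R$ by definition. For axiom (ii), the descending chain $I_{n+1}\subseteq I_n$ gives $I_{n+1}R_{\mathfrak{p}}\cap R \subseteq I_n R_{\mathfrak{p}}\cap R$ for each $\mathfrak{p}$, and intersecting over the common index set $W$ yields $I_{(n+1)}\subseteq I_{(n)}$ for $n\geq 1$; the case $n=0$ is clear because $I_{(1)}\subseteq R = I_{(0)}$.

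The substantive step is axiom (iii): $I_{(n)}I_{(m)}\subseteq I_{(n+m)}$ for all $n,m$. The cases where $n=0$ or $m=0$ are trivial, so assume $n,m\geq 1$; then $n+m\geq 1$ as well, so all three symbolic powers are taken over the same set $W$. Fix $\mathfrak{p}\in W$. Since localization at $\mathfrak{p}$ is exact and commutes with products of ideals, we have $(I_n R_{\mathfrak{p}})(I_m R_{\mathfrak{p}}) = (I_n I_m)R_{\mathfrak{p}} \subseteq I_{n+m}R_{\mathfrak{p}}$, where the inclusion uses axiom (iii) for the original filtration $\mathcal{F}$. Now take $a\in I_{(n)}$ and $b\in I_{(m)}$; then for each $\mathfrak{p}\in W$ the images of $a$ and $b$ lie in $I_n R_{\mathfrak{p}}$ and $I_m R_{\mathfrak{p}}$ respectively, hence $ab \in (I_n I_m)R_{\mathfrak{p}}\subseteq I_{n+m}R_{\mathfrak{p}}$, so $ab \in I_{n+m}R_{\mathfrak{p}}\cap R$. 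As this holds for every $\mathfrak{p}\in W$, we conclude $ab\in \bigcap_{\mathfrak{p}\in W}(I_{n+m}R_{\mathfrak{p}}\cap R) = I_{(n+m)}$, which establishes axiom (iii).

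The main thing to be careful about is that the three symbolic powers involved in axiom (iii) are computed with respect to the \emph{same} set of primes, which is exactly what Proposition~\ref{Pro. Min} provides (and which would fail without it, since in general $\mathrm{Min}(I_n I_m)$ need not coincide with $\mathrm{Min}(I_{n+m})$). I expect that the only genuine subtlety is justifying the identity $I^{(1)} = \bigcap_{\mathfrak{p}\in \mathrm{Min}(I)}(IR_{\mathfrak{p}}\cap R)$ used throughout; this is the remark stated just after the definition of the first symbolic power, and it is where Theorem~\ref{PAT} (prime avoidance) enters, to guarantee that for $\mathfrak{p}\in\mathrm{Min}(I)$ the localized ideal $IR_{\mathfrak{p}}$ picks out precisely the $\mathfrak{p}$-primary component $Q_j$ in a minimal primary decomposition — so that the intersection over minimal primes of $IR_{\mathfrak{p}}\cap R$ equals the intersection of the isolated primary components $\bigcap_{\sqrt{Q_j}\in\mathrm{Min}(I)}Q_j$. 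Once that identification is in hand, the argument above goes through with only routine localization bookkeeping.
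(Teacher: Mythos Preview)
Your proof is correct, and for axioms (i) and (ii) it coincides with the paper's argument. For axiom (iii), however, you take a genuinely different and more economical route. The paper works with explicit minimal primary decompositions of $I_i$, $I_j$, and $I_{i+j}$: it invokes prime avoidance (Theorem~\ref{PAT}) to select elements in the radicals of the embedded primary components that avoid every minimal prime, multiplies a given $z\in I_{(i)}I_{(j)}$ by suitable powers of these elements to force the product into $I_i I_j\subseteq I_{i+j}$, and then uses the $\mathfrak p_k$-primary property of $Q_{(i+j)k}$ to strip the extra factors back off, yielding $z\in I_{(i+j)}$. You bypass all of this by working directly with the localization description $I_{(n)}=\bigcap_{\mathfrak p\in W}(I_n R_{\mathfrak p}\cap R)$ and the fact that localization commutes with products of ideals; once Proposition~\ref{Pro. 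Min} fixes the common index set $W$, multiplicativity follows in two lines. The paper's longer argument has the side benefit that the same primary-decomposition machinery is reused in the proof of Theorem~\ref{IIsym}, but for the proposition at hand your approach is cleaner. One minor correction to your closing remark: the identity $I^{(1)}=\bigcap_{\mathfrak p\in\mathrm{Min}(I)}(IR_{\mathfrak p}\cap R)$ does not actually require prime avoidance---it is a standard consequence of how primary components behave under localization (compare Proposition~\ref{Pro.Local})---so in your route Theorem~\ref{PAT} plays no role at all.
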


\begin{proof}
We verify the defining properties of a filtration. By the definition of the
first symbolic power and   Proposition \ref{Pro. Min}, we get 
\[
I_{(i)}=I_i^{(1)}
= \bigcap_{\mathfrak p \in \mathrm{Min}(I_i)} (I_i R_{\mathfrak p} \cap R)
= \bigcap_{\mathfrak p \in \mathrm{Min}(I_1)} (I_i R_{\mathfrak p} \cap R), \; \text{where}\; i\geq 1.
\]
Since $I_{i+1} \subseteq I_i$, localization gives that 
\[
I_{i+1} R_{\mathfrak p} \subseteq I_i R_{\mathfrak p}
\quad \text{for all } \mathfrak p \in \mathrm{Min}(I_1).
\]
Taking intersections over all such $\mathfrak p$, we obtain $I_{(i+1)} \subseteq I_{(i)}.$  It  remains to prove that  
$I_{(i)} I_{(j)} \subseteq I_{(i+j)}$ for all $i,j \geq 1$.  Let
$
\mathrm{Min}(I_1)=\{\mathfrak{p}_1,\ldots, \mathfrak{p}_r\}.
$
Consider the following  minimal primary decompositions
\[
I_i=\bigcap_{k=1}^s Q_{ik}, \qquad
I_j=\bigcap_{\ell=1}^t Q_{j\ell}, \qquad
I_{i+j}=\bigcap_{m=1}^u Q_{(i+j)m}.
\]
By Proposition \ref{Pro. Min}, without loss of generality, we may assume that
\[
\mathfrak{p}_k=\sqrt{Q_{ik}}=\sqrt{Q_{jk}}=\sqrt{Q_{(i+j)k}}
\quad \text{for all } 1\leq k \leq r.
\]
This implies that 
\[
I_{(i)}=\bigcap_{k=1}^r Q_{ik} \quad \text{and} \quad I_{(j)}=\bigcap_{k=1}^r Q_{jk}.
\]
For each $k$ with $r<k\le s$, choose an element $x_k \in \sqrt{Q_{ik}} \setminus (\mathfrak{p}_1\cup\cdots\cup \mathfrak{p}_r)$ 
(Theorem \ref{PAT} guarantees the existence of such an element), 
and for each $\ell$ with $r<\ell\le t$, pick an element $y_\ell \in \sqrt{Q_{j\ell}} \setminus (\mathfrak{p}_1\cup\cdots\cup \mathfrak{p}_r)$ 
(Theorem \ref{PAT} guarantees the existence of such an element). 
On account of  $x_k\in \sqrt{Q_{ik}}$, there exists $a_k>0$ such that
$x_k^{a_k}\in Q_{ik}$ for all $r<k\le s$.
Similarly, since $y_\ell\in \sqrt{Q_{j\ell}}$, there exists $b_\ell>0$ such that
$y_\ell^{b_\ell}\in Q_{j\ell}$ for all $r<\ell\le t$. Take an element  $z\in I_{(i)}I_{(j)}$, and define
\[
z' := z\,
\bigl(x_{r+1}^{a_{r+1}}\cdots x_s^{a_s}\bigr)
\bigl(y_{r+1}^{b_{r+1}}\cdots y_t^{b_t}\bigr).
\]
Then $z'\in I_i I_j\subseteq I_{i+j}$. Here, fix $1\leq k\leq r$. Since $Q_{(i+j)k}$ is $\mathfrak{p}_k$-primary and
$x_{k'}^{a_{k'}}\notin \mathfrak{p}_k$ for all $k'\in\{r+1,\ldots,s\}$, it follows that
\[
z\,\bigl(y_{r+1}^{b_{r+1}}\cdots y_t^{b_t}\bigr)\in Q_{(i+j)k}.
\]
Again, since $y_{\ell'}^{b_{\ell'}}\notin \mathfrak{p}_k$ for all
$\ell'\in\{r+1,\ldots,t\}$ and $Q_{(i+j)k}$ is $\mathfrak{p}_k$-primary, we conclude that
$z\in Q_{(i+j)k}$. As this holds for every $1\leq k \leq r$, we obtain 
\[
z\in \bigcap_{k=1}^r Q_{(i+j)k} = I_{(i+j)}.
\]
This gives rise to $I_{(i)} I_{(j)} \subseteq I_{(i+j)},$ which completes the proof.
\end{proof}


We end this section with the following result.

\begin{proposition} \label{LIJ}
Let $\mathcal{F}=\{I_i\}_{i\in\mathbb{N}}$ be a filtration of a  ring $R$.  
Then
\[
I_{i-j} \subseteq (I_{i} : I_j)
\quad \text{for all } i,j \in\mathbb{N}, i-j>0.
\]
\end{proposition}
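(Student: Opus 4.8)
The plan is to unwind the definition of the colon ideal and then invoke only the multiplicative axiom (iii) of a filtration. Recall that by definition $(I_i : I_j) = \{r \in R : r I_j \subseteq I_i\}$, so proving the claimed inclusion amounts to verifying that $I_{i-j}\, I_j \subseteq I_i$. Since $i - j > 0$, the index $i-j$ is a positive integer (and $j \in \mathbb{N}$), so all the ideals appearing are legitimate members of the family $\mathcal{F}$, and property (iii) applies directly with $n = i-j$ and $m = j$.

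Concretely, I would argue as follows. Let $x \in I_{i-j}$ be arbitrary and let $y \in I_j$ be arbitrary. By the multiplicative property (iii) of the filtration, applied to the indices $n = i-j$ and $m = j$, we have $I_{i-j} I_j \subseteq I_{(i-j)+j} = I_i$; in particular $xy \in I_i$. Since $y \in I_j$ was arbitrary, this shows $x I_j \subseteq I_i$, i.e. $x \in (I_i : I_j)$. As $x \in I_{i-j}$ was arbitrary, we conclude $I_{i-j} \subseteq (I_i : I_j)$, which is the desired statement.

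I do not anticipate any genuine obstacle here: the result is an immediate consequence of the semigroup-like axiom in the definition of a filtration, and the only thing to be slightly careful about is the indexing hypothesis $i - j > 0$, which is exactly what guarantees that $I_{i-j}$ is defined (with $I_{i-j} \subseteq I_1 \subseteq R$ when $i-j \geq 1$) so that the product $I_{i-j} I_j$ lands in $I_i$ via (iii). One may additionally remark that the edge case $j = 0$ is consistent with the statement, since then $I_0 = R$ gives $(I_i : I_0) = I_i$ and $I_{i-0} = I_i$, so the inclusion becomes the trivial equality $I_i \subseteq I_i$.
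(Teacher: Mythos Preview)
Your argument is correct and is essentially identical to the paper's own proof: both simply invoke axiom~(iii) of a filtration to get $I_{i-j}I_j\subseteq I_i$ and then read off the inclusion into the colon ideal elementwise. The additional remark about the case $j=0$ is a harmless aside not present in the paper.
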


\begin{proof}
By definition of a filtration, for all  $i,j \in\mathbb{N}$ with  $i-j>0$, we have 
\[
I_j I_{i-j} \subseteq I_i.
\]
Hence, for every $x\in I_{i-j}$ and every $y\in I_j$, we obtain $xy\in I_{i}$, which implies  that  
$x\in (I_{i}:I_j)$.   Therefore, we can conclude that  $I_{i-j} \subseteq (I_{i}:I_j)$.
\end{proof}


\section{Strongly  Persistent filtrations}
 
Several of the arguments developed in this section are inspired by, and may be viewed as natural generalizations and adaptations of, the techniques introduced in the classical setting of ordinary powers of ideals, as studied in \cite{HQ}. In particular, our use of colon ideals, localization at prime ideals, and the characterization of associated primes follows the same philosophy underlying the persistence theory for powers, while requiring additional care to address the greater flexibility of arbitrary filtrations. The proofs presented here extend these ideas beyond the rigid structure of powers, preserving their core mechanisms while adapting them to the broader framework of filtrations. We begin with the following definitions.

 \begin{definition}
Let $\mathcal{F}=\{I_i\}_{i\in\mathbb{N}}$ be a filtration of a ring $R$.  
We say that $\mathcal{F}$ is \emph{strongly persistent} if
\[
(I_{i+1}:I_1)=I_{i}
\quad \text{for all } i\in\mathbb{N}.
\]
\end{definition}

\begin{definition}
Let $\mathcal{F}=\{I_i\}_{i\in\mathbb{N}}$ be a filtration of a  ring $R$.  
We say that $\mathcal{F}$ is \emph{persistent} if
\[
\mathrm{Ass}(R/I_i)\subseteq \mathrm{Ass}(R/I_{i+1})
\quad \text{for all } i\in\mathbb{N}.
\]
\end{definition}

These  notions generalize  the classical concept of persistence for powers and symbolic
powers of an ideal. Indeed, when $\mathcal{F}=\{I^i\}_{i\in\mathbb{N}}$ is the filtration
given by the ordinary powers of an ideal $I$, the condition
\[
(I^{i+1}:I)=I^i \text{ for } i\geq 1,
\]
recovers the usual definition of strongly persistence for powers. In the particular case of the filtrations given by the ordinary powers
$\{I^i\}_{i\ge 0}$ and the symbolic powers $\{I^{(i)}\}_{i\ge 0}$ of an ideal $I$. Similarly, when
$\mathcal{F}=\{I^{(i)}\}_{i\in\mathbb{N}}$ is the symbolic filtration associated with  $I$,
the equality
\[
(I^{(i+1)}:I)=I^{(i)} \text{ for } i\geq 1,
\]
corresponds to symbolic strong persistence. Hence, persistence of filtrations provides a
unified framework that encompasses both ordinary and symbolic powers as particular
cases.  In the case of ordinary powers and symbolic powers of an ideal, both the strong
persistence property and the symbolic strong persistence property imply the following 
\[
(I^i : I^j) = I^{\,i-j}
\quad \text{for all } i\ge j\ge 1,
\]
and 
\[
(I^{(i)} : I^{(j)}) = I^{(\,i-j)}
\quad \text{for all } i\ge j\ge 1,
\]
see \cite[Proposition 2.1]{RNA}  and \cite[Proposition 12]{RT} for more information. 

Following these results, we show in the subsequent proposition that this property also holds in the general setting of arbitrary filtrations.


\begin{proposition}
Let $\mathcal{F} = \{I_i\}_{i \ge 1}$ be a filtration of a ring $R$.  Then, for all $i \ge j \ge 1$,
 $(I_i : I_j) = I_{i-j}$ if and only if 
  $(I_{k+1} : I_1) = I_k$   for  all $k \geq 1.$ 
\end{proposition}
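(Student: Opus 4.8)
The plan is to prove the two implications separately, treating the ``if'' direction as the substantive one.

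First I would dispose of the easy direction. Suppose $(I_i : I_j) = I_{i-j}$ for all $i \ge j \ge 1$. Then, specializing to $j = 1$ and $i = k+1$ (which indeed satisfies $i \ge j \ge 1$ and $i - j = k \ge 1$ for any $k \ge 1$), we immediately get $(I_{k+1} : I_1) = I_{(k+1)-1} = I_k$. So this direction is a one-line specialization.

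The harder direction is to assume $(I_{k+1} : I_1) = I_k$ for all $k \ge 1$ and deduce $(I_i : I_j) = I_{i-j}$ for all $i \ge j \ge 1$. I would proceed by induction on $j$. One inclusion, $I_{i-j} \subseteq (I_i : I_j)$, is free from Proposition~\ref{LIJ} and needs no hypothesis. For the reverse inclusion $(I_i : I_j) \subseteq I_{i-j}$, the base case $j = 1$ is exactly the strong persistence hypothesis applied with $k = i - 1$. For the inductive step, suppose the claim holds for $j$; I want it for $j+1$ (with $i \ge j+1$). The key observation is the iterated colon identity $(I_i : I_{j+1}) = (I_i : I_j I_1) = ((I_i : I_1) : I_j)$ --- wait, this is not quite right since $I_{j+1}$ need not equal $I_j I_1$; rather $I_j I_1 \subseteq I_{j+1}$, which gives only $(I_i : I_{j+1}) \subseteq (I_i : I_j I_1) = ((I_i : I_j) : I_1)$. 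By the inductive hypothesis $(I_i : I_j) = I_{i-j}$, so $(I_i : I_{j+1}) \subseteq (I_{i-j} : I_1)$, and since $i - j \ge 1$ this last colon is $I_{i-j-1} = I_{i-(j+1)}$ by the strong persistence hypothesis (with $k = i-j-1$). Combined with the free inclusion from Proposition~\ref{LIJ}, this yields equality, completing the induction.

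The main obstacle to watch for is the asymmetry noted above: a filtration only satisfies $I_j I_1 \subseteq I_{j+1}$, not equality, so one must be careful that the colon manipulation flows in the right direction --- fortunately $(I_i : I_{j+1}) \subseteq (I_i : I_j I_1)$ is exactly the inclusion one needs for the upper bound, and the lower bound is supplied independently by Proposition~\ref{LIJ}, so the argument closes. A secondary point is bookkeeping on the index ranges: one should check that whenever the strong persistence hypothesis is invoked the relevant index $k = i - j - 1$ is at least $1$, which holds precisely because we are in the inductive step with $i \ge j + 1$; the boundary case $i = j$ (where $i - j = 0$) is handled separately by noting $(I_i : I_i) = R = I_0$, or by restricting attention to $i > j$ and treating $i = j$ trivially.
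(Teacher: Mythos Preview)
Your proof is correct and follows essentially the same inductive strategy as the paper: both induct on $j$, use the filtration inclusion $I_j I_1 \subseteq I_{j+1}$ to bound $(I_i : I_{j+1})$ by an iterated colon, and then apply the inductive hypothesis together with strong persistence (the paper peels off $I_1$ first and $I_j$ second via element-chasing, you do it in the opposite order via the identity $(I_i : I_j I_1) = ((I_i : I_j) : I_1)$, but this is cosmetic). One minor correction to your closing bookkeeping remark: in the inductive step with $i \ge j+1$ the index $k = i - j - 1$ is only $\ge 0$, not $\ge 1$; the relevant boundary case is $i = j+1$ (where $(I_1 : I_1) = R = I_0$ trivially), not the case $i = j$ you flag.
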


\begin{proof}
The forward implication is obvious. To establish the converse, we proceed by induction on $j \geq 1$.
 Let $j=1$. By our hypothesis, for all $i \ge 1$, we have $(I_i : I_1) = I_{i-1}.$ Hence, the statement holds for $j=1$.

Now, assume that for some $j \ge 1$, we have $(I_i : I_j) = I_{i-j} \quad \text{for all } i \ge j.$ 
We show that $(I_i : I_{j+1}) = I_{i-(j+1)}$ for all  $i \ge j+1.$ To do this, we first pick  $x \in I_{i-(j+1)}$. Since $\{I_k\}_{k \ge 1}$ is a 
filtration, we have $I_m I_n \subseteq I_{m+n}$ for all $m,n \ge 1$. Taking $m = i-(j+1)$ and $n = j+1$ yields
 $I_{i-(j+1)} I_{j+1} \subseteq I_i.$ This gives that  $x I_{j+1} \subseteq I_i$, and so $x \in (I_i : I_{j+1})$. Therefore, we obtain 
\[
I_{i-(j+1)} \subseteq (I_i : I_{j+1}).
\]
To complete the proof, let   $x \in (I_i : I_{j+1})$, i.e., $x I_{j+1} \subseteq I_i$.  It follows from the  hypothesis that $(I_{j+1} : I_1) = I_j$, and so 
$I_j I_1 \subseteq I_{j+1}$. Thus, for any $a \in I_j$, we have $x(a I_1) \subseteq x I_{j+1} \subseteq I_i$. This implies that 
$ (x a) I_1 \subseteq I_i.$   By the base case $(I_i : I_1) = I_{i-1}$, it follows that 
 $x a \in I_{i-1}$ for all $a \in I_j$. This yields that $x I_j \subseteq I_{i-1}.$  Therefore,  $x \in (I_{i-1} : I_j).$ 
 By the induction hypothesis on $j$ (note $i-1 \ge j$), we have  $(I_{i-1} : I_j) = I_{(i-1)-j} = I_{i-(j+1)}.$ 
 We thus get $x \in I_{i-(j+1)}$, and hence $(I_i : I_{j+1}) \subseteq I_{i-(j+1)},$ as required. 
  Combining the inclusions, we obtain  $(I_i : I_{j+1}) = I_{i-(j+1)}.$

This completes the induction step, and by induction on $j$, the   claim    holds for all $i \ge j \ge 1$.
\end{proof}


To establish Theorem \ref{IIsym}, we need the following auxiliary result. 

\begin{proposition}\label{Pro.Local}(\cite[Proposition 4.1]{NKRT})
Let $I$ be an ideal in a commutative Noetherian ring $R$. Also, let $I=Q_1 \cap \cdots \cap Q_t \cap Q_{t+1} \cap \cdots \cap Q_r$ be a minimal primary decomposition of $I$ with $\mathfrak{p}_i=\sqrt{Q_i}$ for  $i=1, \ldots, r$, and $\mathrm{Min}(I)=\{\mathfrak{p}_1, \ldots, \mathfrak{p}_t\}$. Then $I_{\mathfrak{p}_i}=({Q_i})_{\mathfrak{p}_i}$ for  $i=1, \ldots, t$.
\end{proposition}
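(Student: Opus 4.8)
The plan is to prove Proposition~\ref{Pro.Local} by localizing the given primary decomposition at each minimal prime $\mathfrak{p}_i$ (with $1 \le i \le t$) and exploiting the fact that a primary component supported away from $\mathfrak{p}_i$ becomes the unit ideal after localization. Concretely, since localization commutes with finite intersections, we have
\[
I_{\mathfrak{p}_i} = (Q_1)_{\mathfrak{p}_i} \cap \cdots \cap (Q_r)_{\mathfrak{p}_i}
\]
for each fixed $i \in \{1,\ldots,t\}$, so the whole task reduces to understanding each factor $(Q_j)_{\mathfrak{p}_i}$.

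The key step is the following dichotomy. Fix $i \le t$ and consider any $j \ne i$. I claim $\mathfrak{p}_j \not\subseteq \mathfrak{p}_i$: if $j \le t$ this is immediate because $\mathfrak{p}_i$ and $\mathfrak{p}_j$ are distinct minimal primes of $I$, and if $j > t$ then $\mathfrak{p}_j$ is an embedded prime, hence it properly contains some minimal prime, which by minimality of $\mathfrak{p}_i$ forces $\mathfrak{p}_j \not\subseteq \mathfrak{p}_i$ as well (were $\mathfrak{p}_j \subseteq \mathfrak{p}_i$, then $\mathfrak{p}_i$ would contain a minimal prime strictly smaller than itself, contradicting $\mathfrak{p}_i \in \mathrm{Min}(I)$). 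Therefore there is an element $s \in \mathfrak{p}_j \setminus \mathfrak{p}_i$; since $Q_j$ is $\mathfrak{p}_j$-primary, a suitable power $s^N$ lies in $Q_j$, and $s^N \notin \mathfrak{p}_i$, i.e.\ $s^N$ is a unit in $R_{\mathfrak{p}_i}$ belonging to $(Q_j)_{\mathfrak{p}_i}$. Hence $(Q_j)_{\mathfrak{p}_i} = R_{\mathfrak{p}_i}$ for every $j \ne i$. Plugging this into the localized intersection collapses everything except the $i$-th term, yielding $I_{\mathfrak{p}_i} = (Q_i)_{\mathfrak{p}_i}$, as desired.

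I expect the only real subtlety to be the verification that embedded primes $\mathfrak{p}_j$ ($j > t$) are not contained in a minimal prime $\mathfrak{p}_i$; this is where Noetherianity and the structure of primary decomposition are used, but it is a standard fact (every prime in $\mathrm{Ass}(R/I)$ contains some minimal prime of $I$, and a minimal prime of $I$ cannot strictly contain another prime containing $I$). Everything else — commuting localization with finite intersection, the primary-to-unit collapse — is routine. Since the statement is quoted from \cite{NKRT}, one could alternatively simply cite it, but the self-contained argument above is short enough to include directly.
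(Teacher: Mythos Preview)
The paper does not supply its own proof of this proposition; it is simply quoted as \cite[Proposition~4.1]{NKRT} and used as an auxiliary result inside the proof of Theorem~\ref{IIsym}. Your self-contained argument is correct and is exactly the standard one (localize the primary decomposition, observe that every component with $j\ne i$ blows up to the unit ideal). One minor streamlining: the case split for $j\le t$ versus $j>t$ is unnecessary, since for any $j\ne i$ the prime $\mathfrak{p}_j$ contains $I$, and $\mathfrak{p}_j\subseteq\mathfrak{p}_i$ together with the minimality of $\mathfrak{p}_i$ over $I$ would force $\mathfrak{p}_j=\mathfrak{p}_i$, contradicting the distinctness of the associated primes in a minimal primary decomposition.
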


\begin{theorem}\label{IIsym}
Let  $I,J,$ and $L$ be ideals of a ring $R$  such that
\[
\mathrm{Min}(I)=\mathrm{Min}(J)=\mathrm{Min}(L) \quad\text{and}\quad  (I:J)=L.
\]
Then
\[
(I^{(1)} : J^{(1)})= L^{(1)}.
\]
\end{theorem}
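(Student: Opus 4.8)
The plan is to work with minimal primary decompositions of $I$, $J$, and $L$, exploiting the hypothesis $\mathrm{Min}(I)=\mathrm{Min}(J)=\mathrm{Min}(L)$ to align the primary components attached to minimal primes. Write $\mathrm{Min}(I)=\{\mathfrak p_1,\dots,\mathfrak p_t\}$ and fix minimal primary decompositions $I=\bigcap_{k=1}^r Q_k$, $J=\bigcap_{k=1}^s Q'_k$, $L=\bigcap_{k=1}^u Q''_k$, indexed so that $\sqrt{Q_k}=\sqrt{Q'_k}=\sqrt{Q''_k}=\mathfrak p_k$ for $1\le k\le t$, with all components indexed beyond $t$ being embedded. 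Then by the definition of the first symbolic power, $I^{(1)}=\bigcap_{k=1}^t Q_k$, $J^{(1)}=\bigcap_{k=1}^t Q'_k$, $L^{(1)}=\bigcap_{k=1}^t Q''_k$. The first step is to reduce the colon-equality to a local statement: for any ideal $A$ with $\mathrm{Min}(A)\subseteq\{\mathfrak p_1,\dots,\mathfrak p_t\}$ one has $A^{(1)}=\bigcap_{k=1}^t (A R_{\mathfrak p_k}\cap R)$, and a standard fact is that an element $x$ lies in $\bigcap_{k=1}^t(AR_{\mathfrak p_k}\cap R)$ iff $x\in AR_{\mathfrak p_k}$ for every $k$. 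Hence I would aim to show $(I^{(1)}:J^{(1)})R_{\mathfrak p_k}=L^{(1)}R_{\mathfrak p_k}$-type identifications component by component, using Proposition~\ref{Pro.Local}, which tells us that $(I^{(1)})_{\mathfrak p_k}=I_{\mathfrak p_k}=(Q_k)_{\mathfrak p_k}$, and similarly $(J^{(1)})_{\mathfrak p_k}=(Q'_k)_{\mathfrak p_k}$, $(L^{(1)})_{\mathfrak p_k}=(Q''_k)_{\mathfrak p_k}$.

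Next I would prove the inclusion $L^{(1)}\subseteq (I^{(1)}:J^{(1)})$. Since $L=(I:J)$ we have $LJ\subseteq I$, hence $L^{(1)}J^{(1)}\subseteq LJ\subseteq I$ (using $L^{(1)}\supseteq L$, $J^{(1)}\supseteq J$ only gives the wrong direction, so instead argue: $L^{(1)}J^{(1)}$ has all its minimal-prime-localizations controlled). More carefully: for each $k\le t$, localizing $LJ\subseteq I$ at $\mathfrak p_k$ and using flatness gives $(L^{(1)})_{\mathfrak p_k}(J^{(1)})_{\mathfrak p_k}=L_{\mathfrak p_k}J_{\mathfrak p_k}\subseteq I_{\mathfrak p_k}=(I^{(1)})_{\mathfrak p_k}$; then intersecting the contractions over all $k$ shows $L^{(1)}J^{(1)}\subseteq I^{(1)}$, whence $L^{(1)}\subseteq (I^{(1)}:J^{(1)})$. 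For the reverse inclusion $(I^{(1)}:J^{(1)})\subseteq L^{(1)}$, take $x$ with $xJ^{(1)}\subseteq I^{(1)}$ and fix $k\le t$; localizing at $\mathfrak p_k$ gives $\tfrac{x}{1}(J^{(1)})_{\mathfrak p_k}\subseteq (I^{(1)})_{\mathfrak p_k}$, i.e. $\tfrac{x}{1}J_{\mathfrak p_k}\subseteq I_{\mathfrak p_k}$, so $\tfrac{x}{1}\in (I_{\mathfrak p_k}:J_{\mathfrak p_k})=(I:J)_{\mathfrak p_k}=L_{\mathfrak p_k}=(L^{(1)})_{\mathfrak p_k}$; since this holds for all $k\le t$, we get $x\in\bigcap_k((L^{(1)}) R_{\mathfrak p_k}\cap R)=L^{(1)}$, where the last equality uses that $L^{(1)}$ is an intersection of $\mathfrak p_k$-primary ideals for $k\le t$ and each such $Q''_k$ is contracted from its own localization.

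The main obstacle I anticipate is justifying the interchange of colon and localization, $(I:J)_{\mathfrak p}=(I_{\mathfrak p}:J_{\mathfrak p})$: this requires $J$ to be finitely generated, which holds since $R$ is Noetherian (stated in the paper's conventions), so this is safe but must be invoked explicitly. A secondary subtlety is the precise statement that $x\in\bigcap_{k=1}^t(AR_{\mathfrak p_k}\cap R)$ recovers $A^{(1)}$ when $A^{(1)}=\bigcap_{k=1}^t Q_k^A$ with $Q_k^A$ being $\mathfrak p_k$-primary; here one uses that each $\mathfrak p_k$ is minimal over $A^{(1)}$ (so $\mathrm{Min}(A^{(1)})=\{\mathfrak p_1,\dots,\mathfrak p_t\}$ with no embedded components) together with Proposition~\ref{Pro.Local} applied to $A^{(1)}$ itself, giving $(A^{(1)})_{\mathfrak p_k}=(Q_k^A)_{\mathfrak p_k}$ and $(Q_k^A)_{\mathfrak p_k}\cap R=Q_k^A$ by primariness. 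Once these two localization facts are cleanly stated, the two inclusions above are short and the proof concludes.
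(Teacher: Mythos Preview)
Your argument is correct, and the setup together with the inclusion $L^{(1)}\subseteq (I^{(1)}:J^{(1)})$ matches the paper's proof essentially verbatim: both localize $LJ\subseteq I$ at each minimal prime $\mathfrak p_k$, use Proposition~\ref{Pro.Local} to identify $I_{\mathfrak p_k}=(Q_k)_{\mathfrak p_k}$ (and similarly for $J,L$), deduce $Q''_kQ'_k\subseteq Q_k$ by contracting, and conclude. Where you diverge is in the reverse inclusion $(I^{(1)}:J^{(1)})\subseteq L^{(1)}$. The paper handles this without localizing the colon: for $x\in(I^{(1)}:J^{(1)})$ it uses $J\subseteq J^{(1)}$ to get $xJ\subseteq I^{(1)}$, then chooses (via prime avoidance, Theorem~\ref{PAT}) elements $x_k\in\sqrt{Q_k}\setminus(\mathfrak p_1\cup\cdots\cup\mathfrak p_t)$ for each embedded component $Q_k$ of $I$, multiplies $x$ by a suitable product of powers of the $x_k$ to force the result into all of $I$, applies $(I:J)=L$ directly, and finally peels off the multipliers using that each $Q''_k$ is $\mathfrak p_k$-primary. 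Your route instead localizes $xJ^{(1)}\subseteq I^{(1)}$ at $\mathfrak p_k$ and uses the Noetherian identity $(I_{\mathfrak p_k}:J_{\mathfrak p_k})=(I:J)_{\mathfrak p_k}$ to land in $L_{\mathfrak p_k}$, then contracts. Your approach is more uniform (the same localization device handles both inclusions) and shorter; the paper's multiplier argument is more explicit and makes the role of the embedded components of $I$ visible, at the cost of invoking prime avoidance. Both proofs ultimately rely on the Noetherian hypothesis (for primary decomposition and, in the other inclusion, for the colon--localization commutation), so neither gains generality over the other.
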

\begin{proof}
Set $\mathrm{Min}(I):=\{\mathfrak{p}_1,\ldots, \mathfrak{p}_r\}.$ Consider the following  minimal primary decompositions
\[
I=\bigcap_{k=1}^s Q_{1k},
\qquad
J=\bigcap_{\ell=1}^t Q_{2\ell},
\qquad
L=\bigcap_{m=1}^u Q_{3m}.
\]
Because  $\mathrm{Min}(I)=\mathrm{Min}(J)=\mathrm{Min}(L)$, without loss of generality, we may assume that
\[
\mathfrak{p}_k=\sqrt{Q_{1k}}=\sqrt{Q_{2k}}=\sqrt{Q_{3k}}
\quad \text{for all }  1\leq k \leq r.
\]
 We thus obtain the following 
\[
I^{(1)}=\bigcap_{k=1}^r Q_{1k}
\qquad\text{and}\qquad J^{(1)}=\bigcap_{k=1}^r Q_{2k}.
\]
For each $k$ with $r<k\le s$, select  an element
 $x_k \in \sqrt{Q_{1k}} \setminus (\mathfrak{p}_1\cup\cdots\cup \mathfrak{p}_r)$
  (the existence of such an element follows from Theorem \ref{PAT}). 
Since $x_k\in \sqrt{Q_{1k}}$, there exists $a_k>0$ such that
\[
x_k^{a_k}\in Q_{1k}
\quad \text{for all } r<k\le s.
\]
Now, let $x\in (I^{(1)}:J^{(1)})$.  Then $xy\in I^{(1)}$ for every $y\in J^{(1)}$. Since $J\subseteq J^{(1)}$,
it follows that $xy\in I^{(1)}$ for all  $y\in J.$ Therefore, we get 
\[
x\bigl(x_{r+1}^{a_{r+1}}\cdots x_s^{a_s}\bigr)y \in I
\quad \text{for all } y\in J,
\]
which shows that  $x\bigl(x_{r+1}^{a_{r+1}}\cdots x_s^{a_s}\bigr)\in (I:J).$ Due to  $(I:J)=L$, this implies that 
$
x\bigl(x_{r+1}^{a_{r+1}}\cdots x_s^{a_s}\bigr)\in L.
$
In particular, we can deduce that  
\[x\bigl(x_{r+1}^{a_{r+1}}\cdots x_s^{a_s}\bigr)\in Q_{3k}
\quad \text{for all } 1\leq k \leq r.
\]
Here, fix $1\leq k \leq r$. In light of  $Q_{3k}$ is $\mathfrak{p}_k$-primary and
$x_{k'}\notin \mathfrak{p}_k$ for all $k'\in\{r+1,\ldots,s\}$, one can conclude that $x\in Q_{3k}.$ 
By virtue of  this holds for every $1\leq k \leq r$, we conclude that $$x\in \bigcap_{k=1}^r Q_{3k} = L^{(1)}.$$
Consequently, we get   $(I^{(1)}:J^{(1)})\subseteq L^{(1)}.$ 

We now prove the reverse inclusion $L^{(1)} \subseteq (I^{(1)} : J^{(1)})$.
Let $z \in L^{(1)}$ and $y \in J^{(1)}$. Fix $1 \le k \le r$.
Localizing at $\mathfrak p_k$, one can derive from Proposition \ref{Pro.Local} that 
\[
I_{\mathfrak p_k} = (Q_{1k})_{\mathfrak p_k}, \quad
J_{\mathfrak p_k} =(Q_{2k})_{\mathfrak p_k}, \quad
L_{\mathfrak p_k} = (Q_{3k})_{\mathfrak p_k}.
\]
Since $(I:J)=L$, localization yields that $L_{\mathfrak p_k} = (I_{\mathfrak p_k} : J_{\mathfrak p_k}).$
Hence,
\[
(Q_{3k})_{\mathfrak p_k} \cdot (Q_{2k})_{\mathfrak p_k}
\subseteq (Q_{1k})_{\mathfrak p_k}.
\]
Contracting back to $R$ and using that $Q_{1k}$, $Q_{2k}$, and $Q_{3k}$ are  $\mathfrak p_k$-primary, we obtain
$
Q_{3k} Q_{2k} \subseteq Q_{1k}.
$
Therefore, since $z \in Q_{3k}$ and $y \in Q_{2k}$, we have $zy \in Q_{1k}$.
As this holds for every $1 \le k \le r$, it follows that
\[
zy \in \bigcap_{k=1}^r Q_{1k} = I^{(1)}.
\]
Thus $z \in (I^{(1)} : J^{(1)})$, proving the desired inclusion.
This finishes the proof. 
\end{proof}


The following result follows immediately from  Theorem \ref{IIsym}.

\begin{corollary} \label{SPP-SSPP}
Let $\mathcal{F}=\{I_i\}_{i\in\mathbb{N}}$ be a filtration of a  ring $R$, and let
 $\mathcal{F}_{\mathrm{sym}}=\{I_{(i)}\}_{i\in\mathbb{N}}$ 
be the symbolic filtration associated with  $\mathcal{F}$. Then  if $\mathcal{F}$ is strongly persistent, then $\mathcal{F}_{\mathrm{sym}}$
    is strongly persistent.
\end{corollary}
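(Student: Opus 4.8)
The plan is to deduce the statement directly from Theorem~\ref{IIsym}, applied one level at a time along the filtration. Recall that $\mathcal{F}$ being strongly persistent means $(I_{i+1}:I_1)=I_i$ for every $i\in\mathbb{N}$, while the assertion to be proved is that $\mathcal{F}_{\mathrm{sym}}$ is strongly persistent, i.e. $(I_{(i+1)}:I_{(1)})=I_{(i)}$ for every $i\in\mathbb{N}$, where $I_{(i)}=I_i^{(1)}$ for $i\ge 1$ and $I_{(0)}=R$ (this family is indeed a filtration by Proposition~\ref{Pro.Symbolic.1}).

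First I would dispose of the degenerate index $i=0$ separately: the required equality is $(I_{(1)}:I_{(1)})=I_{(0)}=R$, which holds trivially since $(A:A)=R$ for any ideal $A$ of $R$ (as $xA\subseteq A$ for every $x\in R$). This case needs to be isolated because Proposition~\ref{Pro. Min} is stated for indices $\ge 1$ and $I_{(0)}$ is defined by convention rather than as a symbolic power.

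Next, fix $i\ge 1$ and set $I:=I_{i+1}$, $J:=I_1$, and $L:=I_i$. By Proposition~\ref{Pro. Min} we have $\mathrm{Min}(I_{i+1})=\mathrm{Min}(I_1)=\mathrm{Min}(I_i)$, so the hypothesis $\mathrm{Min}(I)=\mathrm{Min}(J)=\mathrm{Min}(L)$ of Theorem~\ref{IIsym} is met. Strong persistence of $\mathcal{F}$ gives $(I:J)=(I_{i+1}:I_1)=I_i=L$, which is the second hypothesis of Theorem~\ref{IIsym}. Applying that theorem yields
\[
(I_{(i+1)}:I_{(1)})=(I_{i+1}^{(1)}:I_1^{(1)})=(I^{(1)}:J^{(1)})=L^{(1)}=I_i^{(1)}=I_{(i)}.
\]
Since this holds for all $i\ge 1$ and the case $i=0$ was handled above, $\mathcal{F}_{\mathrm{sym}}$ is strongly persistent.

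There is essentially no obstacle: the entire content is carried by Theorem~\ref{IIsym}, with Proposition~\ref{Pro. Min} supplying the stability of the minimal primes along a filtration (which is exactly what lets us verify the $\mathrm{Min}$-hypothesis for the triple $(I_{i+1},I_1,I_i)$). The only point requiring a little care is the separate treatment of the trivial level $i=0$.
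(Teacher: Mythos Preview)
Your proof is correct and follows exactly the approach of the paper, which simply states that the corollary is an immediate consequence of Theorem~\ref{IIsym}. You have made the deduction explicit by invoking Proposition~\ref{Pro. Min} to verify the $\mathrm{Min}$-hypothesis and by isolating the trivial case $i=0$, but the underlying idea is the same.
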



These results show that strong persistence is a robust property under the passage to
symbolic filtrations. In particular, the theory of persistent filtrations provides a
common framework extending the classical notions of persistence and symbolic
persistence for powers of ideals.

The properties of strong persistence and persistence property similarly to symbolic strong persistence are particular cases of the above concept for the filtration  $\{I^i\}_{i\in\mathbb{N}}$ and $\{I^{(i)}\}_{i\in\mathbb{N}}$,  respectively.

\bigskip

\begin{lemma} \label{ass(I:f)}
 Let $\mathcal{F}=\{I_k\}_{k\in\mathbb{N}}$ be a filtration of a  ring $R$.
Fix $k\in\mathbb{N}$ and let $\mathfrak{p}\in V(I_1)$, where $V(I_1)$ denotes  the variety of  $I_1$.  Then an element
 $f\in ((I_{k})_\mathfrak{p} : \mathfrak m_{\mathfrak{p}}) \setminus (I_k)_\mathfrak{p}$ if and only if 
 $\bigl((I_k)_\mathfrak{p} : f\bigr)=\mathfrak{m}_\mathfrak{p},$ 
where $(I_k)_\mathfrak{p}$ denotes the localization of $I_k$ at $\mathfrak{p}$, and $\mathfrak{m}_{\mathfrak{p}}$
is the maximal ideal of the local ring $R_\mathfrak{p}$.
\end{lemma}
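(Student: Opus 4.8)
The plan is to prove the two implications separately by unwinding the definition of the socle-type colon ideal in the local ring $R_{\mathfrak p}$. Write $A := (I_k)_{\mathfrak p}$, an ideal of the Noetherian local ring $(R_{\mathfrak p}, \mathfrak m_{\mathfrak p})$; note that $\mathfrak p \in V(I_1)$ together with $I_1^k \subseteq I_k \subseteq I_1 \subseteq \mathfrak p$ guarantees $A \subseteq \mathfrak m_{\mathfrak p}$, so $\mathfrak m_{\mathfrak p} \supseteq A$ and the statement $(A : f) = \mathfrak m_{\mathfrak p}$ makes sense as an equality of ideals both containing $A$.

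For the forward direction, suppose $f \in (A : \mathfrak m_{\mathfrak p}) \setminus A$. By definition $f \mathfrak m_{\mathfrak p} \subseteq A$, which says exactly $\mathfrak m_{\mathfrak p} \subseteq (A : f)$. Since $f \notin A$, we have $1 \notin (A : f)$, so $(A : f)$ is a proper ideal of $R_{\mathfrak p}$, hence contained in the unique maximal ideal $\mathfrak m_{\mathfrak p}$. Combining the two inclusions gives $(A : f) = \mathfrak m_{\mathfrak p}$. For the converse, suppose $(A : f) = \mathfrak m_{\mathfrak p}$. Then $f \mathfrak m_{\mathfrak p} = f (A : f) \subseteq A$, so $f \in (A : \mathfrak m_{\mathfrak p})$. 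Moreover $f \notin A$: if $f \in A$ then $(A : f) = R_{\mathfrak p} \neq \mathfrak m_{\mathfrak p}$ (as $R_{\mathfrak p}$ is local, hence $\mathfrak m_{\mathfrak p}$ is a proper ideal), a contradiction. Therefore $f \in (A : \mathfrak m_{\mathfrak p}) \setminus A$, completing the equivalence.

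There is essentially no serious obstacle here: the statement is a purely formal fact about colon ideals against the maximal ideal of a local ring, and the only genuine point requiring care is checking that $\mathfrak m_{\mathfrak p}$ is a proper ideal and that $f \notin A$ forces $(A:f)$ to be proper — both immediate from $R_{\mathfrak p}$ being local and Noetherian. The role of the hypothesis $\mathfrak p \in V(I_1)$ and of Proposition \ref{Pro. Min} is only to ensure that $\mathfrak p$ actually contains $I_k$, so that everything takes place inside $\mathfrak m_{\mathfrak p}$ and the subsequent applications of the lemma (to associated primes of $R/I_k$) are nonvacuous; strictly speaking the equivalence above holds for any prime $\mathfrak p$, but I would keep the hypothesis as stated since it is what is used downstream. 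I would present the argument in two short paragraphs mirroring the two directions, as above.
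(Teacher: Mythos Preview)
Your proof is correct and follows essentially the same approach as the paper's own argument: both directions are handled by unwinding the colon-ideal definitions in the local ring $R_{\mathfrak p}$, using that $f\notin A$ forces $(A:f)$ to be proper (equivalently, that any $x\notin \mathfrak m_{\mathfrak p}$ is a unit) and that $(A:f)=R_{\mathfrak p}$ whenever $f\in A$. Your additional remark that the hypothesis $\mathfrak p\in V(I_1)$ is not strictly needed for the bare equivalence is accurate and worth keeping as a side comment.
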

\begin{proof} 
 $(\Rightarrow)$   Suppose that $f\in \bigl((I_k)_\mathfrak{p} : \mathfrak{m}_\mathfrak{p}\bigr)\setminus (I_k)_\mathfrak{p}.$  
 Then, by definition, we have  $\mathfrak{m}_\mathfrak{p} f \subseteq (I_k)_\mathfrak{p}$, which implies that 
$\mathfrak m_{\mathfrak{p}} \subseteq \bigl((I_k)_{\mathfrak{p}} : f\bigr).$

Conversely, let $x\in \bigl((I_k)_{\mathfrak{p}} : f\bigr)$, so that $xf\in (I_k)_{\mathfrak{p}}$.
If $x\notin \mathfrak m_{\mathfrak{p}}$, then $x$ is a unit in the local ring $R_{\mathfrak{p}}$.
Hence, $f=x^{-1}(xf)\in (I_k)_{\mathfrak{p}},$ which contradicts the assumption that $f\notin (I_k)_{\mathfrak{p}}$.
Therefore, we must have  $x\in  \mathfrak m_{\mathfrak{p}}$, and so we obtain $\bigl((I_k)_{\mathfrak{p}} : f\bigr)\subseteq \mathfrak m_{\mathfrak{p}}.$ 
 Combining both inclusions, we conclude that
\[
\bigl((I_k)_{\mathfrak{p}} : f\bigr)=\mathfrak m_{\mathfrak{p}}.
\]
$(\Leftarrow)$ Conversely, assume that $\bigl((I_k)_{\mathfrak{p}} : f\bigr)=\mathfrak m_{\mathfrak{p}}.$ 
This  yields that $\mathfrak m_{\mathfrak{p}} f \subseteq (I_k)_\mathfrak{p}$, and hence 
 $f\in \bigl((I_k)_{\mathfrak{p}} : \mathfrak m_{\mathfrak{p}}\bigr)$. Moreover, if $f\in (I_k)_\mathfrak{p}$, then
 $\bigl((I_k)_{\mathfrak{p}} : f\bigr)=R_{\mathfrak{p}},$ which contradicts the assumption that
$\bigl((I_k)_{\mathfrak{p}} : f\bigr)=\mathfrak m_{\mathfrak{p}}$.
Therefore, we must have  $f\in \bigl((I_k)_{\mathfrak{p}} : \mathfrak m_{\mathfrak{p}}\bigr)\setminus (I_k)_{\mathfrak{p}}.$ 
This completes our argument. 
\end{proof}

\begin{remark}
The previous characterization shows that there exists an element
 $f\in \bigl((I_k)_{\mathfrak{p}} : \mathfrak m_{\mathfrak{p}}\bigr)\setminus (I_k)_{\mathfrak{p}}$
if and only if  $\mathfrak m_{\mathfrak{p}}=\bigl((I_k)_{\mathfrak{p}} : f\bigr).$
Equivalently, this condition holds if and only if $\mathfrak m_\mathfrak{p}$ is an associated
prime of the $R_{\mathfrak{p}}$-module $R_{\mathfrak{p}}/(I_k)_{\mathfrak{p}}$. In particular, there exists such an element
$f$ if and only if $\mathfrak{p} \in \mathrm{Ass}(R/I_k)$.
\end{remark}


\begin{proposition}\label{spp-persis}
 Let $\mathcal{F}=\{I_k\}_{k\in\mathbb{N}}$ be a filtration of a  ring $R$. Fix  $k\in\mathbb{N}$. 
  Assume that for any $\mathfrak{p}\in \mathrm{Ass}(R/I_k)$ and any element
 $f\in \bigl((I_k)_{\mathfrak{p}} : \mathfrak m_{\mathfrak{p}}\bigr)\setminus (I_k)_{\mathfrak{p}},$
there exists an element $g\in (I_1)_{\mathfrak{p}}$ such that
 $fg\notin (I_{k+1})_{\mathfrak{p}}.$  Then    $\mathrm{Ass}(R/I_k)\subseteq \mathrm{Ass}(R/I_{k+1}).$
\end{proposition}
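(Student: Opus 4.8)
The plan is to reduce everything to the local characterization of associated primes in Lemma~\ref{ass(I:f)}, together with the standard fact that for a Noetherian ring $R$, an ideal $J$, and a prime $\mathfrak{p}$ one has $\mathfrak{p}\in\mathrm{Ass}(R/J)$ if and only if $\mathfrak{m}_{\mathfrak{p}}\in\mathrm{Ass}\bigl(R_{\mathfrak{p}}/J_{\mathfrak{p}}\bigr)$. Fix $\mathfrak{p}\in\mathrm{Ass}(R/I_k)$; the goal is to produce an element of $R_{\mathfrak{p}}$ witnessing $\mathfrak{m}_{\mathfrak{p}}\in\mathrm{Ass}\bigl(R_{\mathfrak{p}}/(I_{k+1})_{\mathfrak{p}}\bigr)$. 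First I would check that $\mathfrak{p}\in V(I_1)$, which is needed to apply Lemma~\ref{ass(I:f)} at the indices $k$ and $k+1$: since $\mathfrak{p}\in\mathrm{Ass}(R/I_k)$ we have $I_k\subseteq\mathfrak{p}$, and the filtration axiom gives $I_1^{\,k}\subseteq I_k\subseteq\mathfrak{p}$, so primeness of $\mathfrak{p}$ forces $I_1\subseteq\mathfrak{p}$.

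Next, since $\mathfrak{m}_{\mathfrak{p}}\in\mathrm{Ass}\bigl(R_{\mathfrak{p}}/(I_k)_{\mathfrak{p}}\bigr)$, the remark following Lemma~\ref{ass(I:f)} provides an element $f\in\bigl((I_k)_{\mathfrak{p}}:\mathfrak{m}_{\mathfrak{p}}\bigr)\setminus(I_k)_{\mathfrak{p}}$, and the hypothesis of the proposition then yields $g\in(I_1)_{\mathfrak{p}}$ with $fg\notin(I_{k+1})_{\mathfrak{p}}$. I claim $h:=fg$ lies in $\bigl((I_{k+1})_{\mathfrak{p}}:\mathfrak{m}_{\mathfrak{p}}\bigr)\setminus(I_{k+1})_{\mathfrak{p}}$. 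The ``$\setminus(I_{k+1})_{\mathfrak{p}}$'' part is exactly the conclusion supplied by the hypothesis. For the colon part I would compute
\[
\mathfrak{m}_{\mathfrak{p}}\,h=\mathfrak{m}_{\mathfrak{p}}\,f\,g\subseteq (I_k)_{\mathfrak{p}}\,g\subseteq (I_k)_{\mathfrak{p}}(I_1)_{\mathfrak{p}}=(I_kI_1)_{\mathfrak{p}}\subseteq (I_{k+1})_{\mathfrak{p}},
\]
where the first inclusion uses $f\in\bigl((I_k)_{\mathfrak{p}}:\mathfrak{m}_{\mathfrak{p}}\bigr)$, the second uses $g\in(I_1)_{\mathfrak{p}}$, the equality is compatibility of localization with products of ideals, and the last inclusion is the filtration axiom $I_1I_k\subseteq I_{k+1}$ localized at $\mathfrak{p}$. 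Hence $h\in\bigl((I_{k+1})_{\mathfrak{p}}:\mathfrak{m}_{\mathfrak{p}}\bigr)\setminus(I_{k+1})_{\mathfrak{p}}$, so Lemma~\ref{ass(I:f)} applied at index $k+1$ gives $\bigl((I_{k+1})_{\mathfrak{p}}:h\bigr)=\mathfrak{m}_{\mathfrak{p}}$, i.e. $\mathfrak{m}_{\mathfrak{p}}\in\mathrm{Ass}\bigl(R_{\mathfrak{p}}/(I_{k+1})_{\mathfrak{p}}\bigr)$, and therefore $\mathfrak{p}\in\mathrm{Ass}(R/I_{k+1})$. As $\mathfrak{p}$ was arbitrary, $\mathrm{Ass}(R/I_k)\subseteq\mathrm{Ass}(R/I_{k+1})$.

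There is no real obstacle here: the substantive content has been built into the hypothesis (existence of the multiplier $g$), and what remains is a short localization-and-colon bookkeeping. The only points demanding minor care are verifying $\mathfrak{p}\in V(I_1)$ so that Lemma~\ref{ass(I:f)} is applicable, correctly invoking the localization identities $(I_k)_{\mathfrak{p}}(I_1)_{\mathfrak{p}}=(I_kI_1)_{\mathfrak{p}}\subseteq(I_{k+1})_{\mathfrak{p}}$, and noting that the statement is vacuously true when $\mathrm{Ass}(R/I_k)=\emptyset$ (for instance when $k=0$), so no separate case is required.
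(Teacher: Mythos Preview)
Your proof is correct and follows essentially the same route as the paper's: pick $\mathfrak{p}\in\mathrm{Ass}(R/I_k)$, pass to $R_{\mathfrak{p}}$, obtain a witness $f$ for $\mathfrak{m}_{\mathfrak{p}}$, multiply by the $g$ supplied by the hypothesis, and use the filtration inclusion $(I_k)_{\mathfrak{p}}(I_1)_{\mathfrak{p}}\subseteq(I_{k+1})_{\mathfrak{p}}$ to see that $fg$ witnesses $\mathfrak{m}_{\mathfrak{p}}\in\mathrm{Ass}\bigl(R_{\mathfrak{p}}/(I_{k+1})_{\mathfrak{p}}\bigr)$. The only differences are cosmetic: you explicitly verify $\mathfrak{p}\in V(I_1)$ before invoking Lemma~\ref{ass(I:f)} (a hypothesis the paper tacitly assumes), and you package the equality $\bigl((I_{k+1})_{\mathfrak{p}}:fg\bigr)=\mathfrak{m}_{\mathfrak{p}}$ via the lemma rather than arguing the two inclusions directly as the paper does.
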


\begin{proof}
Take  $\mathfrak{p}\in \mathrm{Ass}(R/I_k)$. Then, by localization, the maximal ideal
$\mathfrak m_{\mathfrak{p}}$ belongs to $\mathrm{Ass}(R_{\mathfrak{p}}/(I_k)_{\mathfrak{p}})$. Hence,  there exists an element
  $f\in R_{\mathfrak{p}}$   such that  $\bigl((I_k)_{\mathfrak{p}} : f\bigr)=\mathfrak m_{\mathfrak{p}}.$ 
It follows from  Lemma \ref{ass(I:f)} that $f\in \bigl((I_k)_{\mathfrak{p}} : \mathfrak m_{\mathfrak{p}}\bigr)\setminus (I_k)_{\mathfrak{p}}.$
In addition, by our hypothesis, there exists an element $g\in (I_1)_{\mathfrak{p}}$ such that
  $fg\notin (I_{k+1})_{\mathfrak{p}}.$ Since $fg\notin (I_{k+1})_{\mathfrak{p}}$, we get  
  $\bigl((I_{k+1})_{\mathfrak{p}} : fg\bigr)\subseteq \mathfrak m_{\mathfrak{p}}.$ We claim 
  $\mathfrak m_{\mathfrak{p}}\subseteq \bigl((I_{k+1})_{\mathfrak{p}} : fg\bigr).$ By virtue of  $\mathcal F$ is a filtration, we have
  $I_k\cdot I_1 \subseteq I_{k+1}$. Localizing at $\mathfrak p$, this gives $(I_k)_{\mathfrak p}\cdot (I_1)_{\mathfrak p}
\subseteq (I_{k+1})_{\mathfrak p}.$
Let $x\in \mathfrak m_{\mathfrak p} = \bigl((I_k)_{\mathfrak p}:f\bigr)$.
Then $xf\in (I_k)_{\mathfrak p}$, and since $g\in (I_1)_{\mathfrak p}$, we can derive that 
\[
xfg \in (I_k)_{\mathfrak p}(I_1)_{\mathfrak p}
\subseteq (I_{k+1})_{\mathfrak p}.
\]
Hence, $x\in \bigl((I_{k+1})_{\mathfrak p}:fg\bigr)$, and therefore $\mathfrak m_{\mathfrak p}\subseteq \bigl((I_{k+1})_{\mathfrak p}:fg\bigr).$
Consequently, we obtain  $\bigl((I_{k+1})_{\mathfrak{p}} : fg\bigr)=\mathfrak{m}_{\mathfrak{p}},$
which implies that $\mathfrak{m}_{\mathfrak{p}}\in \mathrm{Ass}(R_{\mathfrak{p}}/(I_{k+1})_{\mathfrak{p}})$.
Finally, by the correspondence between associated primes under localization,
we conclude that  $\mathfrak{p}\in \mathrm{Ass}(R/I_{k+1}),$  and hence
$\mathrm{Ass}(R/I_k)\subseteq \mathrm{Ass}(R/I_{k+1}),$  as desired.
 \end{proof}
 
 
 \begin{theorem}\label{Ratliff-colon}
Let $\mathcal{F}=\{I_k\}_{k\in\mathbb{N}}$ be a filtration of a ring $R$.
Fix $k\ge 1$. Then $(I_{k+1} : I_1)= I_k$ if and only if for every $\mathfrak{p}\in \mathrm{Ass}(R/I_k)$ and every element
  $f\in \bigl((I_k)_{\mathfrak{p}} : \mathfrak m_{\mathfrak{p}}\bigr)\setminus (I_k)_{\mathfrak{p}},$ 
there exists an element $g\in (I_1)_{\mathfrak{p}}$ such that $fg\notin (I_{k+1})_{\mathfrak{p}}.$
\end{theorem}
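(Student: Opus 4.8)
The plan is to prove both implications by passing to localizations and tracking the colon ideal $(I_{k+1}:I_1)$ prime by prime.

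For the forward direction, I would assume $(I_{k+1}:I_1)=I_k$ and argue by contradiction: suppose there exist $\mathfrak{p}\in\mathrm{Ass}(R/I_k)$ and $f\in\bigl((I_k)_{\mathfrak p}:\mathfrak m_{\mathfrak p}\bigr)\setminus (I_k)_{\mathfrak p}$ such that $fg\in (I_{k+1})_{\mathfrak p}$ for \emph{all} $g\in (I_1)_{\mathfrak p}$. That last condition says precisely $f\,(I_1)_{\mathfrak p}\subseteq (I_{k+1})_{\mathfrak p}$, i.e.\ $f\in\bigl((I_{k+1})_{\mathfrak p}:(I_1)_{\mathfrak p}\bigr)$. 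Since colon ideals commute with localization, $\bigl((I_{k+1})_{\mathfrak p}:(I_1)_{\mathfrak p}\bigr)=\bigl((I_{k+1}:I_1)\bigr)_{\mathfrak p}=(I_k)_{\mathfrak p}$ by hypothesis, so $f\in (I_k)_{\mathfrak p}$, contradicting the choice of $f$. This settles one direction without needing Lemma~\ref{ass(I:f)} at all; it only uses that $\mathfrak p$ lies in $V(I_1)$ (automatic since $\mathfrak p\in\mathrm{Ass}(R/I_k)$ and $I_1\subseteq I_k$ by Proposition~\ref{Pro. Min}-type containment, or simply because $I_1^k\subseteq I_k\subseteq\mathfrak p$) so that $(I_1)_{\mathfrak p}$ is a proper ideal and the set of admissible $f$ is nonempty only when $\mathfrak p\in\mathrm{Ass}(R/I_k)$.

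For the converse, the containment $I_k\subseteq (I_{k+1}:I_1)$ is automatic from Proposition~\ref{LIJ} (take $i=k+1$, $j=1$), so I only need $(I_{k+1}:I_1)\subseteq I_k$. I would verify this locally at every prime, and it suffices to check it at every $\mathfrak p\in\mathrm{Ass}(R/I_k)$, since an ideal inclusion $A\subseteq B$ can be tested after localizing at the associated primes of $B$ — here $B=I_k$, so $A_{\mathfrak p}\subseteq (I_k)_{\mathfrak p}$ for all $\mathfrak p\in\mathrm{Ass}(R/I_k)$ forces $A\subseteq I_k$ (an element of $A\setminus I_k$ would produce an associated prime of $I_k$ over the annihilator of its image in $R/I_k$ at which the inclusion fails). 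Now fix such a $\mathfrak p$ and suppose for contradiction that $\bigl((I_{k+1})_{\mathfrak p}:(I_1)_{\mathfrak p}\bigr)\not\subseteq (I_k)_{\mathfrak p}$. Among elements of the module $\bigl((I_{k+1})_{\mathfrak p}:(I_1)_{\mathfrak p}\bigr)+(I_k)_{\mathfrak p}$ modulo $(I_k)_{\mathfrak p}$, pick one whose annihilator in $R_{\mathfrak p}$ is maximal; by the standard argument this annihilator is a prime, and since the element is killed by $(I_1)_{\mathfrak p}$ acting into $(I_{k+1})_{\mathfrak p}\subseteq(I_k)_{\mathfrak p}$ it contains $\mathfrak m_{\mathfrak p}$-worth of… more carefully: I want to produce an $f$ with $\bigl((I_k)_{\mathfrak p}:f\bigr)=\mathfrak m_{\mathfrak p}$, i.e.\ $f\in\bigl((I_k)_{\mathfrak p}:\mathfrak m_{\mathfrak p}\bigr)\setminus(I_k)_{\mathfrak p}$, with additionally $f\,(I_1)_{\mathfrak p}\subseteq (I_{k+1})_{\mathfrak p}$, which contradicts the hypothesis. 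To get such an $f$: the submodule $M:=\bigl(\bigl((I_{k+1})_{\mathfrak p}:(I_1)_{\mathfrak p}\bigr)+(I_k)_{\mathfrak p}\bigr)/(I_k)_{\mathfrak p}$ of $R_{\mathfrak p}/(I_k)_{\mathfrak p}$ is nonzero, so it contains an element whose annihilator is an associated prime of $R_{\mathfrak p}/(I_k)_{\mathfrak p}$; the only associated prime of the local ring $R_{\mathfrak p}/(I_k)_{\mathfrak p}$ that could matter is $\mathfrak m_{\mathfrak p}$ itself — but in general $R_{\mathfrak p}/(I_k)_{\mathfrak p}$ has other associated primes too, so I must be more careful.

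Here is the fix, and it is where Lemma~\ref{ass(I:f)} enters: since $\mathfrak p\in\mathrm{Ass}(R/I_k)$, we have $\mathfrak m_{\mathfrak p}\in\mathrm{Ass}\bigl(R_{\mathfrak p}/(I_k)_{\mathfrak p}\bigr)$, so there is $f_0\in R_{\mathfrak p}$ with $\bigl((I_k)_{\mathfrak p}:f_0\bigr)=\mathfrak m_{\mathfrak p}$, hence by Lemma~\ref{ass(I:f)} $f_0\in\bigl((I_k)_{\mathfrak p}:\mathfrak m_{\mathfrak p}\bigr)\setminus(I_k)_{\mathfrak p}$. I claim that if $(I_{k+1}:I_1)\ne I_k$ then one can choose such an $f_0$ additionally lying in $\bigl((I_{k+1})_{\mathfrak p}:(I_1)_{\mathfrak p}\bigr)$, producing the contradiction with the hypothesis (which says \emph{no} such $f$ admits only images landing in $I_{k+1}$). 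Concretely: suppose $h\in\bigl((I_{k+1})_{\mathfrak p}:(I_1)_{\mathfrak p}\bigr)\setminus(I_k)_{\mathfrak p}$. Then $\mathfrak m_{\mathfrak p}^N h\subseteq (I_k)_{\mathfrak p}$ for some $N$ (because $R_{\mathfrak p}/(I_k)_{\mathfrak p}$ is a module over the local ring whose maximal ideal is the only thing large, and $\mathfrak p\in\mathrm{Min}$ or by Artinian-ness after further localizing — actually this needs that the image of $h$ in $R_{\mathfrak p}/(I_k)_{\mathfrak p}$ is annihilated by a power of $\mathfrak m_{\mathfrak p}$, which holds iff $\mathfrak p$ is the only associated prime in the support of that image). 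Choosing $N$ minimal with $\mathfrak m_{\mathfrak p}^N h\subseteq(I_k)_{\mathfrak p}$ but $\mathfrak m_{\mathfrak p}^{N-1}h\not\subseteq(I_k)_{\mathfrak p}$, pick $f\in\mathfrak m_{\mathfrak p}^{N-1}h\setminus(I_k)_{\mathfrak p}$; then $\mathfrak m_{\mathfrak p}f\subseteq(I_k)_{\mathfrak p}$ so $f\in\bigl((I_k)_{\mathfrak p}:\mathfrak m_{\mathfrak p}\bigr)\setminus(I_k)_{\mathfrak p}$, and since $f$ is a multiple of $h$ we get $f\,(I_1)_{\mathfrak p}\subseteq h\,(I_1)_{\mathfrak p}\subseteq(I_{k+1})_{\mathfrak p}$, i.e.\ $fg\in(I_{k+1})_{\mathfrak p}$ for all $g\in(I_1)_{\mathfrak p}$ — contradicting the hypothesis.

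\textbf{The main obstacle.} The delicate point is the claim that a surviving element $h\in\bigl((I_{k+1})_{\mathfrak p}:(I_1)_{\mathfrak p}\bigr)\setminus(I_k)_{\mathfrak p}$ is annihilated by some power of $\mathfrak m_{\mathfrak p}$ modulo $(I_k)_{\mathfrak p}$ — i.e.\ that I can reduce to the $\mathfrak m_{\mathfrak p}$-primary part. This is not automatic for a general prime $\mathfrak p$; the clean way around it is to first localize the \emph{whole} inclusion-testing at a prime $\mathfrak q$ minimal over the annihilator of a would-be counterexample element of $(I_{k+1}:I_1)/I_k$, note that $\mathfrak q\in\mathrm{Ass}(R/I_k)$, replace $R$ by $R_{\mathfrak q}$ so that now $\mathfrak q$ becomes $\mathfrak m$ and the counterexample element's image in $R/I_k$ has annihilator exactly $\mathfrak m$ up to radical hence is $\mathfrak m$-power-killed, and only then run the $\mathfrak m^{N-1}$ argument. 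I would organize the converse as: (1) reduce to showing $(I_{k+1}:I_1)\subseteq I_k$; (2) reduce that to a local statement at each $\mathfrak p\in\mathrm{Ass}(R/I_k)$ where moreover the relevant elements are $\mathfrak m_{\mathfrak p}$-power-torsion mod $(I_k)_{\mathfrak p}$; (3) extract $f$ by the minimal-$N$ trick and invoke Lemma~\ref{ass(I:f)}; (4) contradict the hypothesis. Steps (1)–(2) are the ones requiring genuine care with associated primes and localization; step (3)–(4) are then formal.
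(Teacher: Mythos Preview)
Your forward direction is exactly the paper's argument: localize the equality $(I_{k+1}:I_1)=I_k$ at $\mathfrak p$ and observe that a putative $f$ with $f(I_1)_{\mathfrak p}\subseteq (I_{k+1})_{\mathfrak p}$ would land in $(I_k)_{\mathfrak p}$, contradiction.

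For the converse, your final plan (the ``fix'' in the last paragraph) is correct and close in spirit to the paper, but the paper streamlines precisely the step you flag as the main obstacle. Rather than fixing an arbitrary $\mathfrak p\in\mathrm{Ass}(R/I_k)$ and then struggling to make a counterexample element $\mathfrak m_{\mathfrak p}$-power-torsion, the paper works globally: set $M=(I_{k+1}:I_1)/I_k$, choose $\mathfrak p$ to be a \emph{minimal} prime in $\mathrm{Supp}(M)$, and observe that then $\mathrm{Supp}_{R_{\mathfrak p}}(M_{\mathfrak p})=\{\mathfrak m_{\mathfrak p}\}$, hence $\mathrm{Ass}_{R_{\mathfrak p}}(M_{\mathfrak p})=\{\mathfrak m_{\mathfrak p}\}$. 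This immediately produces an $f\in\bigl((I_{k+1})_{\mathfrak p}:(I_1)_{\mathfrak p}\bigr)$ whose image in $M_{\mathfrak p}$ is annihilated exactly by $\mathfrak m_{\mathfrak p}$, i.e.\ $f\in\bigl((I_k)_{\mathfrak p}:\mathfrak m_{\mathfrak p}\bigr)\setminus(I_k)_{\mathfrak p}$, with no need for your minimal-$N$ trick. (That $\mathfrak p\in\mathrm{Ass}(R/I_k)$ follows since $\mathfrak p\in\mathrm{Ass}(M)\subseteq\mathrm{Ass}(R/I_k)$.) Your route---take $\mathfrak q$ minimal over the annihilator of a chosen counterexample element, then peel off powers of $\mathfrak m_{\mathfrak q}$---reaches the same contradiction and is valid, but it costs you an extra Nakayama-style step that the paper's choice of $\mathfrak p$ avoids entirely.
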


\begin{proof}
We first assume that $(I_{k+1}:I_1)= I_k$ for all $k\ge 1.$ Fix $k\geq 1$. Localizing at a prime ideal $\mathfrak{p}$, we can deduce that 
  $((I_{k+1})_{\mathfrak{p}} : (I_1)_{\mathfrak{p}})= (I_k)_{\mathfrak{p}}$. 
Let $\mathfrak{p}\in \mathrm{Ass}(R/I_k)$. Then the maximal ideal $\mathfrak m_{\mathfrak{p}}$ belongs to
$\mathrm{Ass}(R_{\mathfrak{p}}/(I_k)_{\mathfrak{p}})$, and hence there exists an element
 $f\in \bigl((I_k)_{\mathfrak{p}} : \mathfrak m_{\mathfrak{p}}\bigr)\setminus (I_k)_{\mathfrak{p}}.$
 
Suppose, by contradiction, that $fg\in (I_{k+1})_{\mathfrak{p}} \quad \text{for all } g\in (I_1)_{\mathfrak{p}}.$ 
Then $f\in ((I_{k+1})_{\mathfrak{p}} : (I_1)_{\mathfrak{p}}) = (I_k)_{\mathfrak{p}}$, contradicting the choice of $f$.
Therefore, there exists $g\in (I_1)_{\mathfrak{p}}$ such that $fg\notin (I_{k+1})_{\mathfrak{p}}$.

Conversely, assume that the stated condition holds, but there exists some $k \geq 1$ such that $(I_{k+1} : I_1)\neq  I_k.$  
 Then the $R$-module $M:=(I_{k+1} : I_1)/I_k$ is nonzero. Hence, $\mathrm{Ass}(M)\neq \emptyset$. Due to 
$\mathrm{Ass}(M) \subseteq \mathrm{Supp}(M)$, this implies that $\mathrm{Supp}(M)\neq \emptyset$.  Choose a minimal prime ideal ${\mathfrak{p}}$ in the support of this module. Localizing at ${\mathfrak{p}}$, we obtain that the following $R_{\mathfrak{p}}$-module
\[
M_{\mathfrak{p}}=\left(\frac{(I_{k+1} : I_1)}{I_k}\right)_{\mathfrak{p}}=\frac{\left({(I_{k+1})_{\mathfrak{p}} : (I_1)_{\mathfrak{p}}}\right)}
{({I_k})_{\mathfrak{p}}}
\]
is nonzero.  
Since $\mathfrak p$ is a minimal prime in $\mathrm{Supp}(M)$, localization at $\mathfrak p$ eliminates all other primes in the support. Indeed, if $\mathfrak q\in \mathrm{Supp}_{R_{\mathfrak p}}(M_{\mathfrak p})$, then $\mathfrak q=\mathfrak q'R_{\mathfrak p}$ for some prime ideal $\mathfrak q'\subseteq \mathfrak p$ such that $\mathfrak q'\in \mathrm{Supp}(M)$. By the minimality of $\mathfrak p$ in $\mathrm{Supp}(M)$, we must have $\mathfrak q'=\mathfrak p$, and hence $\mathfrak q=\mathfrak m_{\mathfrak p}$. Therefore,
$
\mathrm{Supp}_{R_{\mathfrak p}}(M_{\mathfrak p})=\{\mathfrak m_{\mathfrak p}\}.
$
As $M_{\mathfrak p}\neq 0$, it follows that $\mathfrak m_{\mathfrak p}\in \mathrm{Ass}_{R_{\mathfrak p}}(M_{\mathfrak p}).$ 
Hence, there exists a nonzero element $\overline{f}\in M_{\mathfrak p}$ such that
 $\mathfrak m_{\mathfrak p}\,\overline{f}=0.$ Let $f\in \bigl((I_{k+1})_{\mathfrak p}:(I_1)_{\mathfrak p}\bigr)$ be a representative of $\overline{f}$.
Then $\overline{f}\neq 0$ implies $f\notin (I_k)_{\mathfrak p}$, and the annihilation condition yields
$
\mathfrak m_{\mathfrak p} f \subseteq (I_k)_{\mathfrak p}.
$
In particular, we have  $f\in \bigl((I_k)_{\mathfrak{p}} : \mathfrak{m}_{\mathfrak{p}}\bigr)\setminus (I_k)_{\mathfrak{p}}.$ 
By our hypothesis, there exists an element  $g\in (I_1)_{\mathfrak{p}}$ such that
 $fg\notin (I_{k+1})_{\mathfrak{p}},$  which contradicts the fact that $f\in ((I_{k+1})_{\mathfrak{p}} : (I_1)_{\mathfrak{p}})$.
Accordingly, we deduce that $(I_{k+1}:I_1)= I_k$, as required. 
\end{proof}


\begin{corollary} \label{SPP-PP}
Let $\mathcal{F}=\{I_i\}_{i\in\mathbb{N}}$ be a filtration of a  ring $R$.  
If $\mathcal{F}$ is strongly persistent, then $\mathcal{F}$ is persistent.
\end{corollary}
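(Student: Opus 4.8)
The plan is to deduce Corollary~\ref{SPP-PP} directly from the machinery already assembled, namely Theorem~\ref{Ratliff-colon} and Proposition~\ref{spp-persis}. Suppose $\mathcal{F}=\{I_i\}_{i\in\mathbb{N}}$ is strongly persistent, so that $(I_{k+1}:I_1)=I_k$ for all $k\in\mathbb{N}$. Fix $k\ge 1$. Since $(I_{k+1}:I_1)=I_k$, Theorem~\ref{Ratliff-colon} applies and tells us that for every $\mathfrak{p}\in\mathrm{Ass}(R/I_k)$ and every element $f\in\bigl((I_k)_{\mathfrak{p}}:\mathfrak{m}_{\mathfrak{p}}\bigr)\setminus(I_k)_{\mathfrak{p}}$, there exists $g\in(I_1)_{\mathfrak{p}}$ with $fg\notin(I_{k+1})_{\mathfrak{p}}$.

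This is precisely the hypothesis of Proposition~\ref{spp-persis} for the index $k$. Applying that proposition yields $\mathrm{Ass}(R/I_k)\subseteq\mathrm{Ass}(R/I_{k+1})$. Since $k\ge 1$ was arbitrary, and the case $k=0$ is trivial because $I_0=R$ forces $\mathrm{Ass}(R/I_0)=\emptyset$, we conclude that $\mathrm{Ass}(R/I_i)\subseteq\mathrm{Ass}(R/I_{i+1})$ for all $i\in\mathbb{N}$, which is exactly the statement that $\mathcal{F}$ is persistent.

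The argument is essentially a one-line chaining of the two previously established results, so there is no real obstacle; the only point requiring a moment of care is the boundary index $k=0$, which is handled by noting $I_0=R$. (Alternatively, one observes that Theorem~\ref{Ratliff-colon} and Proposition~\ref{spp-persis} already range over all $k\ge 1$, and the inclusion for $i=0$ holds vacuously.) I would therefore keep the proof to two or three sentences, citing Theorem~\ref{Ratliff-colon} to verify the hypothesis and Proposition~\ref{spp-persis} to draw the conclusion.
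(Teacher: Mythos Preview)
Your proof is correct and follows exactly the paper's approach: the paper's proof is a single sentence stating that the claim is an immediate consequence of Proposition~\ref{spp-persis} and Theorem~\ref{Ratliff-colon}. Your additional remark handling the boundary case $k=0$ (where $I_0=R$ gives $\mathrm{Ass}(R/I_0)=\emptyset$) is a welcome bit of extra care.
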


\begin{proof}
The claim is an immediate consequence of Proposition \ref{spp-persis} and Theorem \ref{Ratliff-colon}.
    \end{proof}


\section{Filtrations under operations}

In this  short  section, we investigate how filtrations behave under standard operations on rings, such as direct sums. We also show that filtrations on a product of rings can be decomposed into filtrations on each factor. In particular, as an application, we show that the direct sum filtration $\mathcal{F} \bigoplus\mathcal{G}$ is strongly persistent if and only if both $\mathcal{F}$ and $\mathcal{G}$ are strongly persistent.

\begin{proposition}\label{direct-sum-filtration}
Let $\mathcal F=\{I_i\}_{i\in \mathbb{N}}$ and $\mathcal G=\{J_i\}_{i\in \mathbb{N}}$
be filtrations of the rings $R$ and $R'$, respectively.
Then the family $\{I_i\oplus J_i\}_{i\in \mathbb{N}}$ is a filtration of the ring
$R\oplus R'$.
\end{proposition}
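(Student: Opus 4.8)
The plan is to verify the three defining properties of a filtration directly for the family $\{I_i \oplus J_i\}_{i\in\mathbb N}$, working coordinatewise in the product ring $R\oplus R'$. Recall that in $R\oplus R'$ addition and multiplication are componentwise, that the ideals of $R\oplus R'$ are exactly the sets of the form $A\oplus B$ with $A$ an ideal of $R$ and $B$ an ideal of $R'$, and that $(A\oplus B)(C\oplus D)=AC\oplus BD$ while $A\oplus B\subseteq C\oplus D$ iff $A\subseteq C$ and $B\subseteq D$. These elementary facts reduce every condition to the corresponding condition on the two factors, which holds because $\mathcal F$ and $\mathcal G$ are filtrations.

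Concretely, I would proceed as follows. First, since $I_0=R$ and $J_0=R'$ (condition (i) for $\mathcal F$ and $\mathcal G$), we get $I_0\oplus J_0=R\oplus R'$, so (i) holds for the new family. Second, for each $n\in\mathbb N$ we have $I_{n+1}\subseteq I_n$ and $J_{n+1}\subseteq J_n$ by condition (ii) for $\mathcal F$ and $\mathcal G$; hence $I_{n+1}\oplus J_{n+1}\subseteq I_n\oplus J_n$, giving (ii). Third, for $n,m\in\mathbb N$ the product ideal satisfies
\[
(I_n\oplus J_n)(I_m\oplus J_m)=I_nI_m\oplus J_nJ_m\subseteq I_{n+m}\oplus J_{n+m},
\]
using $I_nI_m\subseteq I_{n+m}$ and $J_nJ_m\subseteq J_{n+m}$ from condition (iii) for $\mathcal F$ and $\mathcal G$. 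This establishes (iii) and completes the verification that $\{I_i\oplus J_i\}_{i\in\mathbb N}$ is a filtration of $R\oplus R'$.

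There is no real obstacle here: the statement is essentially a bookkeeping exercise, and the only mild point worth spelling out is the identity $(I_n\oplus J_n)(I_m\oplus J_m)=I_nI_m\oplus J_nJ_m$ for the product of ideals in a direct sum of rings, which one can justify by noting that a generator $(a,b)(c,d)=(ac,bd)$ of the left-hand side lies in $I_nI_m\oplus J_nJ_m$, and conversely any $(\sum a_kc_k,\sum b_\ell d_\ell)$ with $a_k\in I_n$, $c_k\in I_m$, $b_\ell\in J_n$, $d_\ell\in J_m$ can be written as a sum of such products together with products involving the zero element in one coordinate. Since $R\oplus R'$ is again commutative and Noetherian (a finite direct product of Noetherian rings is Noetherian), all standing hypotheses of the paper are preserved, so the resulting object is a filtration in the sense of the paper's definition.
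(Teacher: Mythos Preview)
Your proof is correct and follows essentially the same approach as the paper's own proof: both verify the filtration axioms coordinatewise, with the key step being the identity $(I_n\oplus J_n)(I_m\oplus J_m)=I_nI_m\oplus J_nJ_m$, which the paper also proves by the same two-inclusion argument you sketch. Your version is in fact slightly more complete, since you explicitly check condition (ii) and note that $R\oplus R'$ remains Noetherian, points the paper leaves implicit.
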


\begin{proof}
Recall that the ring structure on $R\oplus R'$ is given componentwise by
\[
(r_1,r_1')(r_2,r_2')=(r_1r_2,r_1'r_2')
\quad \text{for all } r_1,r_2\in R,\; r_1',r_2'\in R'.
\]
Since $\mathcal F$ and $\mathcal G$ are filtrations, we have
 $I_0=R$ , $J_0=R',$ and
\[
I_i I_j \subseteq I_{i+j}, \qquad
J_i J_j \subseteq J_{i+j}
\quad \text{for all } i,j\in \mathbb{N}.
\]
Clearly, $I_0\oplus J_0 = R\oplus R'.$
Let $(a,b)\in I_i\oplus J_i$ and $(c,d)\in I_j\oplus J_j$.
Then $a\in I_i$, $c\in I_j$, $b\in J_i$, and $d\in J_j$, and
 $(a,b)(c,d)=(ac,bd),$  where $ac\in I_i I_j$ and $bd\in J_i J_j$.
This shows that
\[
(I_i\oplus J_i)(I_j\oplus J_j)\subseteq I_i I_j\oplus J_i J_j.
\]
Conversely, every element of $I_i I_j\oplus J_i J_j$ is a finite sum of elements
of the form $(ac,bd)$ with $a\in I_i$, $c\in I_j$, $b\in J_i$, and $d\in J_j$,
and hence belongs to $(I_i\oplus J_i)(I_j\oplus J_j)$.
Therefore,  we can conclude that 
\[
(I_i\oplus J_i)(I_j\oplus J_j)= I_i I_j\oplus J_i J_j.
\]
It follows that
\[
(I_i\oplus J_i)(I_j\oplus J_j)
\subseteq I_{i+j}\oplus J_{i+j}
\quad \text{for all } i,j\in \mathbb{N},
\]
and hence $\{I_i\oplus J_i\}_{i\in \mathbb{N}}$ is a filtration of $R\oplus R'$, as claimed. 
\end{proof}

 
  Before formulating the next proposition, we require to recall the following facts:
  
  \begin{fact} \label{fact.1}
Let $R$ and $R'$ be rings. For ideals $I, I' \subseteq R$ and $J, J' \subseteq R'$, we have
\[
(I \oplus J)(I' \oplus J') = I I' \oplus J J'.
\]
\end{fact}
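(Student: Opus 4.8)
The plan is to establish the two inclusions separately, reusing essentially the componentwise computation already performed inside the proof of Proposition \ref{direct-sum-filtration}. Recall that multiplication in $R\oplus R'$ is given componentwise, so a typical generator of the product ideal $(I\oplus J)(I'\oplus J')$ has the form $(a,b)(a',b')=(aa',bb')$ with $a\in I$, $b\in J$, $a'\in I'$, $b'\in J'$, and the ideal $(I\oplus J)(I'\oplus J')$ consists precisely of all finite sums of such generators.

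For the inclusion $(I\oplus J)(I'\oplus J')\subseteq II'\oplus JJ'$, I would observe that each generator $(aa',bb')$ lies in $II'\oplus JJ'$ since $aa'\in II'$ and $bb'\in JJ'$; because $II'\oplus JJ'$ is an ideal of $R\oplus R'$, and hence an additive subgroup, every finite sum of such generators stays inside it. For the reverse inclusion $II'\oplus JJ'\subseteq (I\oplus J)(I'\oplus J')$, I would take an arbitrary element $(x,y)\in II'\oplus JJ'$, write $x=\sum_k a_k a_k'$ and $y=\sum_\ell b_\ell b_\ell'$ with $a_k\in I$, $a_k'\in I'$, $b_\ell\in J$, $b_\ell'\in J'$, and then note that $(x,0)=\sum_k (a_k,0)(a_k',0)$ and $(0,y)=\sum_\ell (0,b_\ell)(0,b_\ell')$ both belong to $(I\oplus J)(I'\oplus J')$, since $(a_k,0),(0,b_\ell)\in I\oplus J$ and $(a_k',0),(0,b_\ell')\in I'\oplus J'$. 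Adding these two elements yields $(x,y)\in (I\oplus J)(I'\oplus J')$, which finishes the proof.

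There is no real obstacle here; the only points deserving a word of care are that $II'\oplus JJ'$ must be used as an additive subgroup in the first inclusion, and that the ``pure'' elements $(a,0)$ and $(0,b)$ genuinely lie in the respective direct-sum ideals in the second inclusion, so that no cross terms of the form $(a,0)(0,b')$ are needed. This is exactly the mechanism already used in Proposition \ref{direct-sum-filtration}, so the argument is short and routine.
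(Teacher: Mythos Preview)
Your proof is correct and essentially the same as what the paper does. The paper states Fact~\ref{fact.1} without a separate proof, but the identical argument is carried out inside the proof of Proposition~\ref{direct-sum-filtration}: the forward inclusion via generators $(a,b)(c,d)=(ac,bd)\in II'\oplus JJ'$, and the reverse inclusion by writing an arbitrary element of $II'\oplus JJ'$ as a finite sum of such products (your use of the pure elements $(a_k,0)$ and $(0,b_\ell)$ makes this step slightly more explicit than the paper's one-line version, but it is the same idea).
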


\begin{fact}\label{fact.2}
Let $R$ and $R'$ be rings. For ideals $A, A' \subseteq R$ and $B, B' \subseteq R'$, we have
\[
A \oplus B \subseteq A' \oplus B' \quad \Longleftrightarrow \quad A \subseteq A' \text{ and } B \subseteq B'.
\]
\end{fact}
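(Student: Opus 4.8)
The plan is to prove the biconditional by establishing the two inclusions separately, in both cases by direct element-chasing, using only the componentwise description of the ring $R\oplus R'$ and the fact that ideals are additive subgroups (in particular contain $0$).

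For the implication ($\Leftarrow$), I would assume $A\subseteq A'$ and $B\subseteq B'$ and take an arbitrary element of $A\oplus B$. Every such element has the form $(a,b)$ with $a\in A$ and $b\in B$; then $a\in A'$ and $b\in B'$, so $(a,b)\in A'\oplus B'$. Since this holds for all such elements, $A\oplus B\subseteq A'\oplus B'$.

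For the implication ($\Rightarrow$), I would assume $A\oplus B\subseteq A'\oplus B'$ and recover each factorwise inclusion by testing against the ``axis'' elements. Given any $a\in A$, the pair $(a,0)$ lies in $A\oplus B$ (here we use $0\in B$), hence in $A'\oplus B'$, which forces $a\in A'$; thus $A\subseteq A'$. By the symmetric argument, for any $b\in B$ the pair $(0,b)\in A\oplus B\subseteq A'\oplus B'$ yields $b\in B'$, so $B\subseteq B'$. Combining the two directions gives the asserted equivalence.

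There is no substantial obstacle in this argument; the only subtlety worth flagging is that the forward direction genuinely relies on $A$ and $B$ containing $0$, so that $(a,0)$ and $(0,b)$ are legitimate members of $A\oplus B$ — this is precisely what allows one to ``project'' the inclusion onto each coordinate.
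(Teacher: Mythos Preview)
Your argument is correct. The paper records this statement as a \emph{Fact} without proof, so there is no proof in the paper to compare against; your elementary element-chasing (using the axis elements $(a,0)$ and $(0,b)$ for the forward direction) is exactly the standard verification one would supply.
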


\begin{proposition}\label{filtration-splits}
Let $\mathcal F=\{\mathcal I_i\}_{i\in \mathbb{N}}$ be a filtration of the ring
$R\oplus R'$.
Then for each $i\in \mathbb{N}$ there exist ideals $I_i\subseteq R$ and
$J_i\subseteq R'$ such that
\[
\mathcal I_i= I_i\oplus J_i.
\]
Moreover, the families $\{I_i\}_{i\in \mathbb{N}}$ and
$\{J_i\}_{i\in \mathbb{N}}$ are filtrations of $R$ and $R'$, respectively.
\end{proposition}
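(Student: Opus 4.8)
The statement has two parts: first, that every ideal $\mathcal I_i$ of $R\oplus R'$ splits as $\mathcal I_i = I_i\oplus J_i$ for suitable ideals $I_i\subseteq R$, $J_i\subseteq R'$; second, that the resulting families $\{I_i\}_i$ and $\{J_i\}_i$ are filtrations. The plan is to handle the splitting by the standard structure theory of ideals in a direct product of rings, and then to transfer each of the three filtration axioms across the decomposition using Fact~\ref{fact.1} and Fact~\ref{fact.2}.

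For the splitting, I would argue as follows. Write $e=(1,0)$ and $e'=(0,1)$, the two orthogonal idempotents of $R\oplus R'$ with $e+e'=1$. For a fixed $i$, set $I_i := \{a\in R : (a,0)\in \mathcal I_i\}$ and $J_i := \{b\in R' : (0,b)\in \mathcal I_i\}$; these are easily checked to be ideals of $R$ and $R'$ respectively. The inclusion $I_i\oplus J_i\subseteq \mathcal I_i$ is immediate since $(a,b)=(a,0)+(0,b)$ and $\mathcal I_i$ is closed under addition. For the reverse inclusion, take $(a,b)\in\mathcal I_i$; then $(a,0)=e\cdot(a,b)\in\mathcal I_i$ and $(0,b)=e'\cdot(a,b)\in\mathcal I_i$, so $a\in I_i$ and $b\in J_i$, giving $(a,b)\in I_i\oplus J_i$. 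Hence $\mathcal I_i = I_i\oplus J_i$.

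For the second part, I would verify the three filtration axioms for $\{I_i\}_i$ (the argument for $\{J_i\}_i$ is symmetric). Axiom (i): since $\mathcal I_0 = R\oplus R'$ and $\mathcal I_0 = I_0\oplus J_0$, Fact~\ref{fact.2} forces $I_0 = R$. Axiom (ii): from $\mathcal I_{i+1}\subseteq\mathcal I_i$ and the decompositions $\mathcal I_{i+1}=I_{i+1}\oplus J_{i+1}$, $\mathcal I_i=I_i\oplus J_i$, Fact~\ref{fact.2} gives $I_{i+1}\subseteq I_i$. Axiom (iii): from $\mathcal I_i\mathcal I_j\subseteq\mathcal I_{i+j}$ together with Fact~\ref{fact.1}, which yields $\mathcal I_i\mathcal I_j = (I_i\oplus J_i)(I_j\oplus J_j)=I_iI_j\oplus J_iJ_j$, we get $I_iI_j\oplus J_iJ_j\subseteq I_{i+j}\oplus J_{i+j}$, and Fact~\ref{fact.2} yields $I_iI_j\subseteq I_{i+j}$. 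Thus $\{I_i\}_i$ is a filtration of $R$, and likewise $\{J_i\}_i$ is a filtration of $R'$.

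I do not anticipate a genuine obstacle here; the proof is essentially bookkeeping once the idempotent decomposition is set up. The only point requiring a little care is making sure that the sets $I_i$ and $J_i$ defined via the idempotents really are two-sided ideals and that the splitting is compatible across all indices $i$ simultaneously (it is, because the construction is performed index by index with no choices involved), so that the resulting families are well defined as sequences.
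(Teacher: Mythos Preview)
Your proposal is correct and follows essentially the same route as the paper: both arguments split each $\mathcal I_i$ as $I_i\oplus J_i$ and then verify the three filtration axioms using Fact~\ref{fact.1} and Fact~\ref{fact.2}. The only cosmetic difference is that you spell out the splitting via the idempotents $e=(1,0)$, $e'=(0,1)$, whereas the paper simply invokes the standard fact that every ideal of a direct product $R\oplus R'$ has the form $I\oplus J$; your version is slightly more self-contained but otherwise identical in substance.
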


\begin{proof}
Since every ideal of the direct product $R\oplus R'$ is of the form $I\oplus J$,
where $I\subseteq R$ and $J\subseteq R'$ are ideals, for each $i\in\mathbb{N}$
there exist ideals $I_i\subseteq R$ and $J_i\subseteq R'$ such that $\mathcal I_i= I_i\oplus J_i.$
As $\mathcal F$ is a filtration, we have $\mathcal I_0=R\oplus R'$. Hence,
$I_0\oplus J_0 = R\oplus R'.$
For any $a\in R$, the element $(a,0)$ belongs to $R\oplus R'$, and thus
$(a,0)\in I_0\oplus J_0$, which implies $a\in I_0$. Therefore $I_0=R$.
Similarly, considering elements of the form $(0,b)$ shows that $J_0=R'$.
 Furthermore, since $\mathcal F$ is a filtration, we have
$\mathcal I_{i+1}\subseteq \mathcal I_i$ for all $i$, and so   
\[
I_{i+1}\oplus J_{i+1}\subseteq I_i\oplus J_i.
\]
Consequently, by virtue of Fact  \ref{fact.2}, we obtain   $I_{i+1}\subseteq I_i$ and $J_{i+1}\subseteq J_i$.
Finally, the multiplicative property
$\mathcal I_i\,\mathcal I_j\subseteq \mathcal I_{i+j}$ gives that 
\begin{equation}
\label{eq:1}
(I_i\oplus J_i)(I_j\oplus J_j)\subseteq I_{i+j}\oplus J_{i+j}.
\end{equation}
In addition, according to Fact \ref{fact.1}, we have 
\begin{equation}
\label{eq:2}
(I_i \oplus J_i)(I_j \oplus J_j) = I_i I_j \oplus J_i J_j.
\end{equation}
It   follows now from  (\ref{eq:1}), (\ref{eq:2}),   and  Fact   \ref{fact.2}  that 
\[
I_iI_j\subseteq I_{i+j}
\quad\text{and}\quad
J_iJ_j\subseteq J_{i+j}.
\]
Therefore, $\{I_i\}_{i\in\mathbb{N}}$ is a filtration of $R$ and
$\{J_i\}_{i\in\mathbb{N}}$ is a filtration of $R'$.
\end{proof}

We conclude the discussion presented in this section with the following proposition.

\begin{proposition} \label{DirectSum-SPP}
    Let $\mathcal{F}=\{I_i\}_{i\in\mathbb{N}}$ and $\mathcal{G}=\{J_i\}_{i\in\mathbb{N}}$ be filtrations of ideals of a commutative ring $R$. 
Then the direct sum filtration $\mathcal{F} \bigoplus\mathcal{G}$ is strongly persistent if and only if both $\mathcal{F}$ and $\mathcal{G}$ are strongly persistent.
\end{proposition}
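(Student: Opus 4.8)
The plan is to reduce the statement to a componentwise computation of colon ideals in the direct product $R\oplus R$, using the structural results already established. By Proposition~\ref{direct-sum-filtration}, the family $\mathcal{F}\bigoplus\mathcal{G}=\{I_i\oplus J_i\}_{i\in\mathbb{N}}$ is indeed a filtration of $R\oplus R$, so the statement is well-posed; strong persistence of this filtration means $\bigl((I_{i+1}\oplus J_{i+1}):(I_1\oplus J_1)\bigr)=I_i\oplus J_i$ for all $i\in\mathbb{N}$. The first key step is to record the elementary fact that colon ideals in a direct product split:
\[
\bigl((A\oplus B):(A'\oplus B')\bigr)=(A:A')\oplus(B:B')
\]
for ideals $A,A'\subseteq R$ and $B,B'\subseteq R'$. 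This follows directly from the componentwise multiplication on $R\oplus R'$: an element $(r,s)$ satisfies $(r,s)(A'\oplus B')\subseteq A\oplus B$ if and only if $rA'\subseteq A$ and $sB'\subseteq B$, since $A'\oplus B'$ contains both $A'\oplus 0$ and $0\oplus B'$.

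Granting this, the forward direction is immediate: if $\mathcal{F}\bigoplus\mathcal{G}$ is strongly persistent, then for each $i$,
\[
I_i\oplus J_i=\bigl((I_{i+1}\oplus J_{i+1}):(I_1\oplus J_1)\bigr)=(I_{i+1}:I_1)\oplus(J_{i+1}:J_1),
\]
and by Fact~\ref{fact.2} we conclude $I_i=(I_{i+1}:I_1)$ and $J_i=(J_{i+1}:J_1)$ for all $i$, i.e., both $\mathcal{F}$ and $\mathcal{G}$ are strongly persistent. The reverse direction runs the same identity backwards: assuming $(I_{i+1}:I_1)=I_i$ and $(J_{i+1}:J_1)=J_i$ for all $i$, the split-colon formula gives $\bigl((I_{i+1}\oplus J_{i+1}):(I_1\oplus J_1)\bigr)=I_i\oplus J_i$ for all $i$, which is precisely the assertion that $\mathcal{F}\bigoplus\mathcal{G}$ is strongly persistent.

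There is no serious obstacle here; the only thing requiring care is the split-colon lemma, which should be stated and proved (or at least justified in a sentence) before invoking it, in the same spirit as Facts~\ref{fact.1} and~\ref{fact.2}. One minor point worth flagging in the write-up: the proposition as stated takes $\mathcal{F}$ and $\mathcal{G}$ to be filtrations of the same ring $R$, so $\mathcal{F}\bigoplus\mathcal{G}$ lives on $R\oplus R$; the argument is identical if one allows two different rings $R$ and $R'$, so it may be cleanest to phrase the split-colon lemma in that slightly more general form and then specialize. The proof is then just two applications of this lemma together with Fact~\ref{fact.2}.
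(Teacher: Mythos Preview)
Your proposal is correct and follows essentially the same approach as the paper: both reduce to the componentwise identity $\bigl((I_{i+1}\oplus J_{i+1}):(I_1\oplus J_1)\bigr)=(I_{i+1}:I_1)\oplus(J_{i+1}:J_1)$, with the paper verifying this inline by element-chasing while you abstract it into a separate split-colon lemma. The only cosmetic difference is packaging; the underlying argument is identical.
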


\begin{proof}
According to Proposition \ref{direct-sum-filtration}, the family $\mathcal{F} \bigoplus\mathcal{G}=\{I_i\oplus J_i\}_{i\in \mathbb{N}}$ is a filtration of  
$R\oplus R$. We first assume that  both $\mathcal{F}$ and $\mathcal{G}$ are strongly persistent. 
 Let $(x,y)\in (I_{i+1}\oplus J_{i+1} : I_1\oplus J_1)$.   Then
\[
(x,y)(a,b)\in I_{i+1}\oplus J_{i+1}
\quad \text{for all } (a,b)\in I_1\oplus J_1.
\]
In particular, for every $a\in I_1$ and $b\in J_1$ we have $xa\in I_{i+1}$ and  $yb\in J_{i+1}.$ Hence,
$x\in (I_{i+1}:I_1)$ and $y\in (J_{i+1}:J_1).$ 
Since the filtrations $\mathcal{F}$ and $\mathcal{G}$ are strongly persistent, it follows that
 $x\in I_i$ and  $y\in J_i$. This implies that $(x,y)\in I_i\oplus J_i.$ Therefore,  the direct sum filtration $\mathcal{F} \bigoplus\mathcal{G}$ is strongly
  persistent.

Conversely, assume that $x\in (I_{i+1}:I_1)$ and $y\in (J_{i+1}:J_1).$ 
Then for every $(a,b)\in I_1\oplus J_1$ we have
\[
(x,y)(a,b)=(xa,yb)\in I_{i+1}\oplus J_{i+1},
\]
which implies that  $(x,y)\in (I_{i+1}\oplus J_{i+1}: I_1\oplus J_1).$  On account of  $\mathcal{F} \bigoplus\mathcal{G}$ is strongly
  persistent, we have $(I_{i+1}\oplus J_{i+1}: I_1\oplus J_1)= I_i\oplus J_i.$  This gives rise to 
  $(x,y)\in I_i\oplus J_i,$ and hence $x\in I_i$ and $y\in J_i$. This means that both $\mathcal{F}$ and $\mathcal{G}$ are strongly persistent, as claimed. 
\end{proof}


\bigskip

\textbf{Acknowledgments}

The authors  would like to express sincere gratitude to Professor Rafael H. Villarreal for his valuable suggestion of extending persistence properties through filtrations, which inspired the development of the present work.

\bigskip
\noindent\textbf{ORCID}
\begin{itemize}
\item  Mehrdad Nasernejad:~~ 0000-0003-1073-1934
\item Jonathan Toledo:~~0000-0003-3274-1367
\end{itemize}


\end{document}